\documentclass[12pt]{amsart}
\usepackage{amsmath,amsfonts,amssymb,amsthm}
\usepackage{graphics,color,epsfig}

\newcommand{\Q}{\mathbb Q}
\newcommand{\R}{\mathbb R}
\newcommand{\C}{\mathbb C}
\newcommand{\Z}{\mathbb Z}

\newcommand{\cD}{\mathcal D}
\newcommand{\cF}{\mathcal F}

\newcommand{\cN}{\mathcal N}

\newcommand{\eps}{\varepsilon}
\newcommand{\vhi}{\varphi}
\renewcommand{\Re}{\textnormal{Re~}}
\renewcommand{\Im}{\textnormal{Im~}}

\DeclareMathOperator{\area}{area}

\theoremstyle{plain}
\newtheorem{theorem}{Theorem}[section]

\newtheorem{lemma}[theorem]{Lemma}
\newtheorem{proposition}[theorem]{Proposition}

\theoremstyle{definition}
\newtheorem{definition}[theorem]{Definition}

\theoremstyle{remark}
\newtheorem{remark}[theorem]{Remark}

\begin{document}
\title{Slow Divergence and Unique Ergodicity}
\author{Yitwah Cheung and Alex Eskin}

\address{San Francisco State University \\
San Francisco, California}
\email{cheung@math.sfsu.edu}

\address{University of Chicago \\
Chicago, Illinois}
\email{eskin@math.uchicago.edu}
\subjclass{32G15, 30F30, 30F60, 37A25} 
\keywords{Teichm\"uller geodesics}
\date{\today}
\begin{abstract}
In \cite{Ma1} Masur showed that a Teichm\"uller geodesic that 
is recurrent in the moduli space of closed Riemann surfaces is 
necessarily determined by a quadratic differential with a uniquely 
ergodic vertical foliation.  In this paper, we show that a divergent 
Teichm\"uller geodesic satisfying a certain slow rate of divergence 
is also necessarily determined by a quadratic differential with 
unique ergodic vertical foliation.  As an application, we sketch 
a proof of a complete characterization of the set of nonergodic 
directions in any double cover of the flat torus branched over 
two points.  
\end{abstract}
\maketitle

\section{Introduction}
Let $(X,q)$ be a holomorphic quadratic differential.  
The line element $|q|^{1/2}$ induces a flat metric on $X$ which 
  has cone-type singularites at the zeroes of $q$ where the cone 
  angle is a integral multiple of $2\pi$.  
A \emph{saddle connection} in $X$ is a geodesic segment with 
  respect to the flat metric that joins a pair of zeroes of 
  $q$ without passing through one in its interior.  
Our main result is a new criterion for the unique ergodicity 
  of the vertical foliation $\cF_v$, defined by $\Re q^{1/2}=0$.  

\textbf{Teichm\"uller geodesics.}
The complex structure of $X$ is uniquely determined by the atlas 
  $\{(U_\alpha,\vhi_\alpha)\}$ of natural parameters away from 
  the zeroes of $q$ specified by $d\vhi_\alpha=q^{1/2}$.  
The evolution of $X$ under the Teichm\"uller flow is the family 
  of Riemann surfaces $X_t$ obtained by post-composing the charts 
  with the $\R$-linear map $z\to e^{t/2}\Re{z}+ie^{-t/2}\Im{z}$.  
It defines a unit-speed geodesic with respect to the Teichm\"uller 
  metric on the moduli space of compact Riemann surfaces normalised 
  so that Teichm\"uller disks have constant curvature $-1$.  
The Teichm\"uller map $f_t:X\to X_t$ takes rectangles to rectangles 
  of the same area, stretching in the horizontal direction and 
  contracting in the vertical direction by a factor of $e^{t/2}$.  
By a \emph{rectangle} in $X$ we mean a holomorphic map a product 
  of intervals in $\C$ such that $q^{1/2}$ pulls back to $\pm dz$.  
All rectangles are assumed to have horizontal and vertical edges.  

Let $\ell(X_t)$ denote the length of the shortest saddle connection.  
Let $d(t)=-2\log\ell(X_t)$.  Our main result is the following.  

\begin{theorem}\label{thm:main}
There is an $\eps>0$ such that if $d(t)<\eps\log t+C$ for some 
  $C>0$ and for all $t>0$, then $\cF_v$ is uniquely ergodic.  
\end{theorem}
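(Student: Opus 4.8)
The plan is to argue by contraposition: assuming $\cF_v$ is \emph{not} uniquely ergodic, I will exhibit a sequence of saddle connections which, placed at well-chosen times along the geodesic, forces $d(t)$ to exceed $\eps\log t+C$ infinitely often for every $\eps<1$ (and, after the inevitable loss of constants, for the $\eps$ of the statement). The first and deepest step is a quantitative form of Masur's criterion. If $\cF_v$ is not minimal there is a vertical saddle connection or a vertical cylinder core $\sigma$, whence $\ell(X_t)\le e^{-t/2}|\sigma|$, so $d(t)\ge t-2\log|\sigma|$ and the hypothesis already fails; so assume $\cF_v$ minimal but not uniquely ergodic. In that case the return map of the vertical flow to a horizontal transversal is a minimal but non-uniquely-ergodic interval exchange transformation, and the structure theory of such maps (a divergent sequence of Rauzy--Veech renormalizations, or equivalently a nested sequence of sub-rectangles carrying a second transverse measure) yields a sequence of saddle connections $\gamma_n$ whose holonomy $(a_n,b_n)$ --- $a_n=\bigl|\int_{\gamma_n}\Re q^{1/2}\bigr|$ the horizontal part, $b_n$ the vertical part --- satisfies: $a_n\to0$; the ``slit areas'' are summable, $\sum_n a_nb_n<\infty$; and consecutive slits obey a renormalization bound of the form $b_{n+1}\le K/a_n$ with $K$ depending only on the stratum (the first return of the vertical flow to a transversal of width $\asymp a_n$ takes time $\asymp 1/a_n$, which is where $\gamma_{n+1}$ lives). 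The summability of $a_nb_n$ is precisely the quantitative content of non-unique-ergodicity, and extracting it together with the combinatorial relation, uniformly over the stratum, is the hardest part of the whole argument.

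Granting this sequence, the rest is a self-contained estimate. Since $a_n\to0$ while $|\gamma_n|\ge\ell(X_0)>0$ keeps $b_n$ bounded away from $0$, the time $t_n:=\log(b_n/a_n)$ is positive for all large $n$, and it is exactly where $t\mapsto e^ta_n^2+e^{-t}b_n^2$ is minimized, the minimum value being $2a_nb_n$. Hence $\ell(X_{t_n})\le\sqrt{2a_nb_n}$ and $d(t_n)\ge-\log(2a_nb_n)$. Feeding this into the hypothesis $d(t_n)<\eps\log t_n+C$ gives
\[
a_nb_n \;>\; \tfrac12\,e^{-C}\bigl(\log(b_n/a_n)\bigr)^{-\eps}.
\]
Write $L_n:=t_n=\log(b_n/a_n)$, so this reads $a_nb_n\gtrsim L_n^{-\eps}$; note $L_n\to\infty$. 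Now substitute the renormalization bound: from $b_{n+1}\le K/a_n$, the displayed inequality at index $n$, and the identity $\log(b_{n+1}/a_{n+1})=\log b_{n+1}^2-\log(a_{n+1}b_{n+1})$ together with the displayed inequality at index $n+1$, one obtains a recursion of the form $L_{n+1}\le L_n+2\eps\log L_{n+1}+O(1)$; passing to the nondecreasing maxima, summing, and using $\log x=o(x^s)$ for every $s>0$ forces $\max_{k\le n}L_k=O(n\log n)$. Therefore $a_nb_n\gtrsim L_n^{-\eps}\gtrsim (n\log n)^{-\eps}$, and since $\sum_n(n\log n)^{-\eps}=\infty$ for every $\eps\le1$, we conclude $\sum_n a_nb_n=\infty$, contradicting the summability from the first step. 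This yields the theorem, the true admissible $\eps$ being pushed below $1$ only to absorb the stratum-dependent weights that the non-ergodicity criterion and the renormalization bound actually carry.

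In short, I expect the analytic part --- the length computation and the bootstrap of the second paragraph --- to be routine, and the main obstacle to be the first step: producing, for an arbitrary non-uniquely-ergodic $\cF_v$, a divergent sequence of saddle connections with \emph{both} summable slit areas \emph{and} a bounded renormalization ratio between consecutive members, i.e.\ turning the qualitative failure of unique ergodicity into a single quantitative series whose convergence can then be played off against the slow-divergence hypothesis.
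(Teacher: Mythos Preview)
Your route is genuinely different from the paper's, and the difference is not cosmetic. The paper does \emph{not} argue by contraposition. It works directly from the slow-divergence hypothesis: for each $t$ it builds a $K$-network $\cN_t$ in $X_t$ consisting of a bounded number $N$ of $\alpha$-buffered squares of side $\asymp t^{-\eps/2}$ (using Delaunay triangulations and a Diophantine control on the widths of vertical strips, Proposition~\ref{prop:strip}); it then chooses a sparse sequence $t_n$ so that the pulled-back squares are quasi-independent (Proposition~\ref{prop:subseq}), applies the Paley--Zygmund lemma in the product space $X^N$ to produce an $N$-tuple of generic points landing simultaneously in all squares of $\cN_n$ infinitely often, and finally uses connectivity of the network graph and Lemma~\ref{lem:visible} to force all ergodic averages to coincide. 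The stratum-dependent $\eps$ enters because one needs $\sum_n \sigma_{t_n}^{2N}=\infty$, which forces $\eps N\le 1$.

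Your contrapositive scheme has a real gap at exactly the place you flag. You need, for an \emph{arbitrary} minimal non-uniquely-ergodic $\cF_v$, a single sequence of saddle connections with holonomies $(a_n,b_n)$ satisfying simultaneously $\sum_n a_nb_n<\infty$ and $b_{n+1}\le K/a_n$ with $K$ depending only on the stratum. Neither ingredient is free. The Masur--Smillie criterion in \cite{MS} runs the other way (summable area-exchange $\Rightarrow$ non-ergodic); its converse---that non-unique-ergodicity produces a summable sequence of slit areas---is not a standard fact, and Rauzy--Veech induction does not hand it to you: non-unique-ergodicity is a statement about the simplex of invariant measures not collapsing, which translates into \emph{unbounded distortion} of the cocycle rather than summability of any particular product along the acceleration times. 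Even granting summability, there is no reason the same sequence carries the uniform renormalization bound; the saddle connections witnessing small area-exchange need not be the ones produced by first-return to a transversal of width $\asymp a_n$. In the paper's \S\ref{S:Veech} such a sequence \emph{does} exist, but only because the branched-double-cover-of-a-torus has a one-parameter family of separating slits with a very rigid continued-fraction combinatorics; that structure is what makes the dichotomy $\sum\delta_j<\infty\Leftrightarrow$ non-ergodic hold there, and it is precisely what is missing in a general stratum. The paper's network/Paley--Zygmund machinery is designed to avoid ever needing this converse.

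Your analytic bootstrap in the second paragraph is essentially sound once the sequence is granted (and is, in spirit, the same growth estimate $t_n=O(n\log n)$ that appears in Proposition~\ref{prop:subseq}); but as written the proposal is a reduction to an unproved structure theorem, not a proof.
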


Theorem~\ref{thm:main} was announced in \cite{CE}.  
In \S\ref{S:Networks} we present the main ideas that go into 
  the proof of Theorem~\ref{thm:main} and use them to prove 
  an extension (see Theorem~\ref{thm:recurrent} of Masur's 
  result in \cite{Ma1} asserting that a Teichm\"uller geodesic 
  which accumulates in the moduli space of closed Riemann 
  surfaces is necessarily determined by a uniquely ergodic 
  foliation.  
After briefly discussing the relationship between the various 
  ways of describing rates in \S\ref{S:Rates}, we prove 
  Theorem~\ref{thm:main} in \S\ref{S:Slow}.  
Then, in \S\ref{S:Veech} we sketch the characterisation of the 
  set of nonergodic directions in the double cover of a torus, 
  branched over two points, answering a question of W.~Veech, 
  (\cite{Ve1}, p.32, question 2).

\section{Networks}\label{S:Networks}
If $\nu$ is a (normalised) ergodic invariant measure transverse to 
  the vertical foliation $\cF_v$ then for any horizontal arc $I$ 
  there is a full $\nu$-measure set of points $x\in X$ satisfying 
  \begin{equation}\label{eq:erg:ave}
    \lim \frac{^\#I\cap L_x}{|L_x|} = \nu(I) 
                            \quad\text{as}\quad |L_x|\to\infty
  \end{equation}
  where $L_x$ represents a vertical segment having $x$ as an endpoint.  
Given $I$, the set $E(I)$ of points satisfying (\ref{eq:erg:ave}) for 
  \emph{some} ergodic invariant $\nu$ has full Lebesgue measure.  
We refer to the elements of $E(I)$ as \emph{generic points} and 
  the limit in (\ref{eq:erg:ave}) as the \emph{ergodic average} 
  determined by $x$.  

To prove unique ergodicity we shall show that the ergodic averages 
  determined by all generic points converge to the same limit.  
The ideas in this section were motivated by the proof of Theorem~1.1 
  in \cite{Ma1}.  

\emph{Convention.} When passing to a subsequence $t_n\to\infty$ 
  along the Teichm\"uller geodesic $X_t$ we shall suppress the 
  double subscript notation and write $X_n$ instead of $X_{t_n}$.  
Similarly, we write $f_n$ instead of $f_{t_n}$.  

\begin{lemma}\label{lem:rectangle}
Let $x,y\in E(I)$ and suppose there is a sequence $t_n\to\infty$ 
  such that for every $n$ the images of $x$ and $y$ under $f_n$ 
  lie in a rectangle $R_n\subset X_n$ and the sequence of heights 
  $h_n$ satisfy $\lim h_ne^{t_n/2}=\infty$.  
Then $x$ and $y$ determine the same ergodic averages.  
\end{lemma}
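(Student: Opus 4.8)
The plan is to pull the rectangle $R_n$ back to $X$ via $f_n^{-1}$ and to compare the vertical leaves through $x$ and $y$ directly on $X$, where they become long segments sitting in a thin, tall rectangle, and then to match their intersection points with $I$ by following horizontal leaves across the thin rectangle.

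First I would set $\tilde R_n := f_n^{-1}(R_n)$. Since $f_n$ is a Teichm\"uller map, it preserves the horizontal and vertical directions and $f_n^{-1}$ stretches the vertical direction by $e^{t_n/2}$, so $\tilde R_n\subset X$ is an embedded rectangle with horizontal and vertical edges, of height $H_n:=h_ne^{t_n/2}$ and width $W_n:=w_ne^{-t_n/2}$, where $w_n$ is the width of $R_n$. The hypothesis gives $H_n\to\infty$, and since $R_n\subset X_n$ is embedded we have $w_nh_n\le\area(X_n)=\area(X)$, whence $W_n\le\area(X)/H_n\to 0$. Both $x$ and $y$ lie in $\tilde R_n$; let $J_x^{(n)}$ (resp.\ $J_y^{(n)}$) be the leaf of $\cF_v$ through $x$ (resp.\ $y$), truncated to $\tilde R_n$. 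Each runs from the bottom edge to the top edge, so $|J_x^{(n)}|=|J_y^{(n)}|=H_n$, and in the product coordinates $(u,v)\in[0,W_n]\times[0,H_n]$ of $\tilde R_n$ we have $x=(u_n,v_n)$, $y=(u_n',v_n')$ with $|u_n-u_n'|\le W_n\to 0$. The next step is to check that $\#(I\cap J_x^{(n)})/H_n\to\nu_x(I)$, and likewise for $y$. Writing $J_x^{(n)}=L_n^-\cup L_n^+$ for the two vertical subsegments from $x$ down to the bottom edge and up to the top edge, of lengths $v_n$ and $H_n-v_n$, both have $x$ as an endpoint, and at least one of the two lengths tends to $\infty$. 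After passing to a subsequence, either both do, in which case $(\ref{eq:erg:ave})$ applied to each gives $\#(I\cap L_n^\pm)=(\nu_x(I)+o(1))|L_n^\pm|$ and so $\#(I\cap J_x^{(n)})=(\nu_x(I)+o(1))H_n$; or one of them, say $L_n^-$, stays bounded, in which case $\#(I\cap L_n^-)$ stays bounded (a vertical segment of bounded length meets the fixed arc $I$ a bounded number of times), while $|L_n^+|=H_n-v_n\sim H_n$ and the same conclusion follows. Since the limit is $\nu_x(I)$ along every subsequence, $\#(I\cap J_x^{(n)})/H_n\to\nu_x(I)$, and symmetrically $\#(I\cap J_y^{(n)})/H_n\to\nu_y(I)$.

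The heart of the argument is to show $|\#(I\cap J_x^{(n)})-\#(I\cap J_y^{(n)})|=o(H_n)$. Let $S_n\subset\tilde R_n$ be the subrectangle bounded by $J_x^{(n)}$ and $J_y^{(n)}$; it is embedded of width $\le W_n\to 0$, hence contains no singularities, and its horizontal leaves are genuine coordinate segments joining $J_x^{(n)}$ to $J_y^{(n)}$ at equal heights, defining a bijection $\Phi_n\colon J_x^{(n)}\to J_y^{(n)}$. Now take $p\in I\cap J_x^{(n)}$ that is not within arc-length $W_n$ of an endpoint of $I$ along $I$. Since $I$ is a horizontal arc and $p$ is an interior point of it, $I$ contains the horizontal segment of length $\le W_n$ issuing from $p$ into $S_n$; this segment is precisely the $\Phi_n$-connector from $p$, so $\Phi_n(p)\in I\cap J_y^{(n)}$. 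As $\Phi_n$ is injective, $\#(I\cap J_y^{(n)})$ is at least the number of such "good" points $p$. The exceptional $p$ are contained in $I_\eta\cap J_x^{(n)}$ for any $\eta>W_n$, where $I_\eta$ is the union of the two length-$\eta$ subarcs of $I$ at its endpoints; by the previous step this has cardinality $(\nu_x(I_\eta)+o(1))H_n$, and $\nu_x(I_\eta)\to 0$ as $\eta\to 0$ since $\nu_x$ is non-atomic. Hence $\#(I\cap J_y^{(n)})\ge\#(I\cap J_x^{(n)})-o(H_n)$, and by symmetry $|\#(I\cap J_x^{(n)})-\#(I\cap J_y^{(n)})|=o(H_n)$. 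Dividing by $H_n$ and combining with the previous step gives $\nu_x(I)=\nu_y(I)$; since the same argument applies to every horizontal arc, $x$ and $y$ determine the same ergodic averages.

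The main obstacle will be making the holonomy comparison in the third paragraph completely rigorous: one must verify that $S_n$ is an honest embedded rectangle with no interior singularities (this is exactly where embeddedness of $R_n$, hence $W_n\to 0$, enters) and that the only points of $I\cap J_x^{(n)}$ which fail to be pushed forward onto $J_y^{(n)}$ are those lying near one of the two endpoints of $I$, whose number is $o(H_n)$ because $\nu_x$ gives small mass to short arcs. The subsidiary point, that a long vertical leaf passing \emph{through} $x$ (rather than emanating from $x$) still realizes the ergodic average $\nu_x(I)$, is handled by the splitting at $x$ described above; the remaining estimates are routine.
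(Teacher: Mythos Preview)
Your overall strategy is exactly the paper's: pull $R_n$ back to an embedded rectangle in $X$, compare the two vertical edges via horizontal holonomy inside that rectangle, and conclude that the intersection counts with $I$ are asymptotically equal. The paper simply reduces first to the case where $f_n(x)$ and $f_n(y)$ sit at opposite corners (which absorbs your splitting argument for $J_x^{(n)}=L_n^-\cup L_n^+$), and then makes a much sharper observation than you do: if $n_-,n_+$ are the intersection counts of the left and right vertical edges of $f_n^{-1}R_n$ with $I$, then $|n_--n_+|\le 1$. The reason is that the holonomy bijection $\Phi_n$ you wrote down fails to carry a point $p\in I\cap J_x^{(n)}$ into $I\cap J_y^{(n)}$ only when the horizontal segment $[p,\Phi_n(p)]\subset S_n$ contains an endpoint of $I$; since $S_n$ is embedded, each endpoint of $I$ lies on at most one such horizontal segment, so the number of exceptional $p$ is bounded by the number of endpoints of $I$, independently of $n$. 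Dividing by $H_n\to\infty$ finishes the proof in one line.

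Your route to the same conclusion has a genuine gap. You bound the exceptional set by $I_\eta\cap J_x^{(n)}$ and then assert that this has cardinality $(\nu_x(I_\eta)+o(1))H_n$ ``by the previous step.'' But the previous step used only that $x\in E(I)$, i.e.\ that the Birkhoff averages along vertical segments from $x$ converge \emph{for the specific arc $I$}; nothing in the hypotheses says $x\in E(I_\eta)$, so you are not entitled to an ergodic average for the shorter arcs $I_\eta$. (Non-atomicity of $\nu_x$ is likewise not given: you only know one number, $\nu_x(I)$.) The fix is precisely the paper's observation above: the exceptional set has at most two elements, so no measure-theoretic estimate is needed at all. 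With that replacement your argument is complete and coincides with the paper's.
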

\begin{proof}
One can reduce to the case where $f_n(x)$ and $f_n(y)$ lie at the 
  corners of $R_n$.  Let $n_-$ (resp. $n_+$) be the number of times 
  the left (resp. right) edge of $f_n^{-1}R_n$ intersects $I$.  
Observe that $n_-$ and $n_+$ differ by at most one so that since 
  $h_ne^{t_n/2}\to\infty$, the ergodic averages for $x$ and $y$ 
  approach the same limit.  
\end{proof}

Ergodic averages taken as $T\to\infty$ are determined by fixed 
  fraction of the tail: for any given $\lambda\in(0,1)$ 
  $$\frac{1}{T}\int_0^T f\to c \quad\text{ implies }\quad 
    \frac{1}{(1-\lambda)T}\int_{\lambda T}^T f\to c.$$  
This elementary observation is the motivation behind the following.  
\begin{definition}\label{def:visible}
A point $x$ is $K$-\emph{visible} from a rectangle $R$ if 
  the vertical distance from $x$ to $R$ is at most $K$ 
  times the height of $R$.  
\end{definition}

We have the following generalisation of Lemma~\ref{lem:rectangle}.  
\begin{lemma}\label{lem:visible}
If $x,y\in E(I)$, $t_n\to\infty$ and $K>0$ are such that for every $n$ 
  the images of $x$ and $y$ under $f_n$ are $K$-visible from some 
  rectangle whose height $h_n$ satisfies $h_ne^{t_n/2}\to\infty$, 
  then $x$ and $y$ determine the same ergodic averages.    
\end{lemma}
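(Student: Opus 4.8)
The plan is to deduce Lemma~\ref{lem:visible} from the mechanism of the proof of Lemma~\ref{lem:rectangle}, the role of the rectangle $R$ in that lemma now being played by a long vertical chord through the rectangle $R_n$ appearing in the hypothesis. Fix a horizontal arc $I$ and $x,y\in E(I)$, and for each $n$ let $R_n\subset X_n$ be the common rectangle, of height $h_n$ with $h_ne^{t_n/2}\to\infty$, from which $f_n(x)$ and $f_n(y)$ are $K$-visible. First I would pass to a subsequence along which, for each of $x$ and $y$, the vertical leaf in $X$ through that point reaches $f_n^{-1}R_n$ after travelling in a fixed one of the two vertical directions. Since $K$-visibility bounds the vertical distance from $f_n(x)$ to $R_n$ by $Kh_n$, this leaf meets $f_n^{-1}R_n$ after arclength $a_n\le Kh_ne^{t_n/2}$; because a vertical leaf is parallel to the vertical edges of a rectangle it can only enter and exit $f_n^{-1}R_n$ through the horizontal edges, so it then crosses $f_n^{-1}R_n$ along a chord $M_x^{(n)}$ (resp.\ $M_y^{(n)}$) of length exactly $h_ne^{t_n/2}$.

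The heart of the argument is to show that $M_x^{(n)}$ inherits the ergodic average determined by $x$: if $\nu$ is the ergodic measure for which $x$ is generic, then $\#(I\cap M_x^{(n)})/(h_ne^{t_n/2})\to\nu(I)$, and likewise for $y$. This is the rigorous content of the remark preceding Definition~\ref{def:visible} that ergodic averages are governed by a fixed fraction of the tail. Writing $N(s)$ for the number of intersections of $I$ with the initial arclength-$s$ segment of the chosen leaf through $x$, we have $N(s)/s\to\nu(I)$ since $x\in E(I)$, and $\#(I\cap M_x^{(n)})=N(a_n+h_ne^{t_n/2})-N(a_n)$ with $a_n/(a_n+h_ne^{t_n/2})\le K/(K+1)<1$; after a further passage to a subsequence along which this ratio converges, dividing by $h_ne^{t_n/2}$ and letting $n\to\infty$ produces the limit $\nu(I)$ — the discarded initial segment contributes negligibly when $a_n$ stays bounded (so $N(a_n)$ is bounded) and contributes its length times $\nu(I)$ in the limit when $a_n\to\infty$.

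Finally I would compare the two chords. They are vertical chords of the single rectangle $f_n^{-1}R_n$, hence the two vertical edges of the sub-rectangle of $f_n^{-1}R_n$ that they bound, so the counting observation used in the proof of Lemma~\ref{lem:rectangle} gives $|\#(I\cap M_x^{(n)})-\#(I\cap M_y^{(n)})|\le 1$. Dividing by $h_ne^{t_n/2}\to\infty$ and combining with the previous step forces the ergodic averages of $x$ and $y$ for $I$ to coincide; since $I$ was an arbitrary horizontal arc containing $x$ and $y$ in its set of generic points, this is the assertion of the lemma.

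The step I expect to be the main obstacle is the middle one. The slogan ``a fixed fraction of the tail determines the ergodic average'' is immediate when the fraction is a constant $\lambda\in(0,1)$, but here the fraction $h_ne^{t_n/2}/(a_n+h_ne^{t_n/2})$ merely stays bounded below, and the length $a_n$ of the part of the leaf that is thrown away need not tend to infinity, so one has to organise the estimate as a short case analysis after extracting a convergent subsequence of ratios, using the crude fact that a vertical segment of bounded length issuing from the fixed point $x$ meets the fixed arc $I$ only boundedly often. Reducing to leaves that traverse $R_n$ through its horizontal edges, and importing the ``two edges of a rectangle differ by at most one crossing'' observation from Lemma~\ref{lem:rectangle}, are routine.
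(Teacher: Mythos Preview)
Your proof is correct and is precisely the argument the paper intends: the paper omits the proof of Lemma~\ref{lem:visible} entirely, offering only the ``fixed fraction of the tail'' observation preceding Definition~\ref{def:visible} together with the mechanism of Lemma~\ref{lem:rectangle} as guidance, and your write-up is the natural execution of that hint. The one point requiring care---that the ratio $a_n/(a_n+h_ne^{t_n/2})$ is merely bounded above by $K/(K+1)$ rather than constant---you handle correctly by extracting a convergent subsequence and splitting into the cases $a_n$ bounded versus $a_n\to\infty$.
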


\begin{definition}\label{def:reachable}
We say two points are $K$-\emph{reachable} from each other if 
  there is a rectangle $R$ from which both are $K$-visible.  
We also say two sets are $K$-\emph{reachable} from each other if 
  every point of one is $K$-reachable from every point of the other.  
\end{definition}

\begin{definition}\label{def:network}
Given a collection $\cN$ of subsets of $X$, we define an undirected 
  graph $\Gamma_K(\cN)$ whose vertex set is $\cN$ and whose edge 
  relation is given by $K$-reachability.  
A subset $Y\subset X$ is said to be $K$-\emph{fully covered} by $\cN$ 
  if every $y\in Y$ is $K$-reachable from some element of $\cN$.  
We say $\cN$ is a $K$-\emph{network} if $\Gamma_K(\cN)$ is connected 
  and $X$ is $K$-fully covered by $\cN$.  
\end{definition}

\begin{proposition}\label{prop:Masur}
If $K>0$, $N>0$, $\delta>0$ and $t_n\to\infty$ are such that 
  for all $n$, there exists a $K$-network $\cN_n$ in $X_n$ 
  consisting of at most $N$ squares, each having measure at 
  least $\delta$, then $\cF_v$ is uniquely ergodic.  
\end{proposition}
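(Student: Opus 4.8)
The plan is to show that any two generic points $x,y \in E(I)$ determine the same ergodic average; unique ergodicity of $\cF_v$ then follows since $E(I)$ has full Lebesgue measure and $I$ was arbitrary. The main tool is Lemma~\ref{lem:visible}: it suffices to produce, for each pair $x,y$, a subsequence of times $t_n$ along which $f_n(x)$ and $f_n(y)$ are $K$-visible from rectangles whose heights $h_n$ satisfy $h_n e^{t_n/2}\to\infty$. Since each $\cN_n$ is a $K$-network, $f_n(x)$ is $K$-reachable from some square $S \in \cN_n$ and $f_n(y)$ is $K$-reachable from some square $S' \in \cN_n$, and connectedness of $\Gamma_K(\cN_n)$ gives a path $S = S_0, S_1, \dots, S_m = S'$ of squares in $\cN_n$ with consecutive squares mutually $K$-reachable. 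The idea is to chain Lemma~\ref{lem:visible} (or rather its conclusion) along this path: if $x$ and an auxiliary generic point determine the same average, and that point and the next determine the same average, etc., then $x$ and $y$ do. Because the path has length at most $N-1$ (independent of $n$), this chaining is uniform.

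The key step is the height bound. Each square $S_i \in \cN_n$ has measure at least $\delta$, hence side length at least $\sqrt{\delta}$ in $X_n$; likewise the connecting rectangles witnessing $K$-reachability between consecutive squares can be taken with height bounded below in terms of $\delta$ and $K$ (a square of side $\ge \sqrt\delta$ that is $K$-visible from a rectangle forces that rectangle to have height at least a definite fraction of $\sqrt\delta$, say $\ge \sqrt\delta/(K+1)$ after possibly shrinking). So all the rectangles involved have height $h_n \ge c(\delta, K) > 0$ bounded below by a constant independent of $n$. Then $h_n e^{t_n/2} \ge c(\delta,K)\, e^{t_n/2} \to \infty$ since $t_n \to \infty$. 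This is exactly the hypothesis needed to invoke Lemma~\ref{lem:visible}.

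More carefully, I would first fix generic points in each square $S_i$ of the network (or use that squares have positive measure, hence contain generic points, and pick one representative $z_i^{(n)} \in S_i$); since $S_i$ is $K$-visible from the rectangle realizing $S_{i-1} \sim S_i$ — or one can arrange visibility of the chosen points directly — Lemma~\ref{lem:visible} applied with the pair $(f_n^{-1}z_{i-1}^{(n)}, f_n^{-1}z_i^{(n)})$ along the full sequence $t_n$ shows these two points determine the same ergodic average. Doing this for $i = 1, \dots, m$ and also handling the two end links ($x$ to $z_0^{(n)}$, and $z_m^{(n)}$ to $y$) via $K$-reachability of $f_n(x)$ from $S_0$ and $f_n(y)$ from $S_m$, transitivity of "determines the same ergodic average" yields that $x$ and $y$ have equal averages. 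One subtlety: the representatives $z_i^{(n)}$ may vary with $n$, so to apply Lemma~\ref{lem:visible} (which needs a fixed pair of points and a sequence $t_n$) one should instead argue that for each fixed $n$ the network structure forces $f_n(x)$ and $f_n(y)$ to be connected by a bounded chain of visibilities, then extract the conclusion directly — i.e. the lemma's proof mechanism (comparing counts of intersections of vertical fibers with $I$, which agree up to $O(1)$ across a $K$-visible rectangle of height $h_n$ with $h_n e^{t_n/2}\to\infty$) should be run along the whole chain at once, so that the total discrepancy in the ergodic averages is $O(N/(h_n e^{t_n/2})) \to 0$.

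The main obstacle is precisely this last point: making the chaining argument genuinely uniform in $n$ despite the network $\cN_n$ and the connecting path possibly changing with $n$. The bound $m \le N-1$ on the path length and the uniform lower bound $h_n \ge c(\delta,K)$ on all heights are what make it work — the accumulated error is at most a constant multiple of $N$ times $1/(h_n e^{t_n/2})$, which vanishes. Everything else is a bookkeeping assembly of Lemma~\ref{lem:visible} and Definitions~\ref{def:visible}--\ref{def:network}.
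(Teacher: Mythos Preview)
Your approach differs substantially from the paper's, and it has a genuine gap.

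The paper does \emph{not} attempt to chain an arbitrary pair of generic points $x,y$ through the time-dependent network.  Instead it uses the uniform area bound $\delta$ in an essential way to manufacture a \emph{fixed} $N$-tuple of generic points $x_1,\dots,x_N$ that \emph{persist} in the squares of the network: enumerating the squares of $\cN_n$ as $A(n,1),\dots,A(n,N)$, the set of $x$ with $f_n(x)\in A(n,1)$ for infinitely many $n$ is a descending intersection of sets of measure $\ge\delta$, hence has measure $\ge\delta$ and so contains a generic point $x_1$; pass to a subsequence, repeat for $A(n,2)$, etc.  Once one has fixed generic points with $f_n(x_i)\in A(n,i)$ for all $n$, connectedness of $\Gamma_K(\cN_n)$ lets one peel off one $x_i$ at a time and apply Lemma~\ref{lem:visible} to a \emph{fixed} pair along a subsequence.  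Finally, full coverage links any generic $z$ to some $x_i$.  This persistence argument is the idea you are missing; it is precisely what eliminates the ``representatives $z_i^{(n)}$ may vary with $n$'' problem you flagged.

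Your proposed fix---running the counting mechanism of Lemma~\ref{lem:rectangle} along the whole chain and bounding the accumulated discrepancy by $O(N/(h_ne^{t_n/2}))$---is plausible in spirit but the height bound you assert is not justified.  You claim that ``a square of side $\ge\sqrt\delta$ that is $K$-visible from a rectangle forces that rectangle to have height at least a definite fraction of $\sqrt\delta$''.  That would be true if \emph{every} point of the square were $K$-visible from a \emph{single} rectangle $R$ (then the vertical extent $\sqrt\delta$ of the square would force $(2K+1)h(R)\ge\sqrt\delta$).  But $K$-reachability of two squares only says that for each pair $(p,q)$ there is \emph{some} witnessing rectangle $R(p,q)$, which may be arbitrarily thin; nothing in Definition~\ref{def:reachable} gives a uniform rectangle or a lower bound on $h(R(p,q))$.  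Without that bound the accumulated-error estimate $O(N/(h_ne^{t_n/2}))$ has no content, and the chain argument does not close.  One can try to salvage this by using the squares $S_i$ themselves (which \emph{do} have height $\ge\sqrt\delta$) as the rectangles in the chain, but then one must show the relevant points are $K'$-visible from the squares for some controlled $K'$, which requires additional argument.  The paper's persistence device sidesteps all of this.
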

\begin{proof}
Suppose $\cF_v$ is not uniquely ergodic.  Then we can find a 
  distinct pair of ergodic invariant measures $\nu_0$ and $\nu_1$ 
  and a horizontal arc $I$ such that $\nu_0(I)\neq\nu_1(I)$.  

We construct a finite set of generic points as follows.  
By allowing repetition, we may assume each $\cN_n$ contains exactly 
  $N$ squares, which shall be enumerated by $A(n,i), i=1,\dots,N$.  
Let $A_1\subset X$ be the set of points whose image under $f_n$ 
  belongs to $A(n,1)$ for infinitely many $n$.  
Note that $A_1$ has measure at least $\delta$ because it is a 
  descending intersection of sets of measure at least $\delta$.  
Hence, $A_1$ contains a generic point; call it $x_1$.  
By passing to a subsequence we can assume the image of $x_1$ 
  lies in $A(n,1)$ for all $n$.  
By a similar process we can find a generic point $x_2$ whose image 
  belongs to $A(n,2)$ for all $n$.  When passing to the subsequence, 
  the generic point $x_1$ retains the property that its image lies 
  in $A(n,1)$ for all $n$.  
Continuing in this manner, we obtain a finite set $F$ consisting of 
  $N$ generic points $x_i$ with the property that the image of $x_i$ 
  under $f_n$ belongs to $A(n,i)$ for all $n$ and $i$.  

Given a nonempty proper subset $F'\subset F$ we can always find 
  a pair of points $x\in F'$ and $y\in F\setminus F'$ such that 
  $f_n(x)$ and $f_n(y)$ are $K$-reachable from each other for 
  infinitely many $n$.  
This follows from the fact that $\Gamma_K(\cN_n)$ is connected.  
By Lemma~\ref{lem:visible}, the points $x$ and $y$ determine 
  the same ergodic averages for any horizontal arc $I$.  
Since $F$ is finite, the same holds for any pair of points in $F$.  

Now let $z_j$ be a generic point whose ergodic average is $\nu_j(I)$, 
  for $j=0,1$.  Since $X_n$ is $K$-fully covered by $\cN_n$, $z_j$ 
  will have the same ergodic average as some point in $F$, which 
  contradicts $\nu_0(I)\neq\nu_1(I)$.  
Therefore, $\cF_v$ must be uniquely ergodic.  
\end{proof}

\begin{definition}\label{def:sep:sys}
Let $(X,q)$ be a holomorphic quadratic differential on a 
  closed Riemann surface of genus at least $2$.  
Two saddle connections are said to be \emph{disjoint} if the 
only points they have in common, if any, are their endpoints.  
We call a collection of pairwise disjoint saddle connections 
a \emph{separating system} if the complement of their union 
has at least two homotopically nontrivial components.  By the 
\emph{length of a separating system} we mean the total length 
of all its saddle connections.  We shall blur the distinction 
between a separating system and the closed subset formed by 
the union of its elements.  
\end{definition}

\begin{definition}\label{def:lengths}
Let $X$ be a closed Riemann surface and $q$ a holomorphic 
  quadratic differential on $X$.  
Let $$\ell_1(X,q)$$ denote the length of the shortest saddle 
  connection in $X$.  
Let $$\ell_2(X,q)$$ denote the infimum of the $q$-lengths of 
  simple closed curves in $X$ that do not bound a disk.  
Let $$\ell_3(X,q)$$ denote the length of the shortest separating 
  system.  
\end{definition}
Observe that $$\ell_1(X,q)\le\ell_2(X,q)\le\ell_3(X,q).$$  

\begin{remark}\label{rem:poles}
Our arguments can also be applied to the case where $X$ is 
  a punctured Riemann surface and $q$ has a simple pole at 
  each puncture.  
A saddle connection is a geodesic segment that joins two 
  singularities (zero or puncture, and not necessarily 
  distinct) without passing through one in its interior.  
In the definition of $\ell_2(X,q)$ the infimum should be 
  taken over simple closed curves that neither bound a 
  disk nor is homotopic to a puncture.  
\end{remark}

\begin{proposition}\label{prop:network}
Let $S$ be a stratum of holomorphic quadratic differentials.  
There are positive constants $K=K(S)$ and $N=N(S)$ such that 
  for any $\delta>0$ there exists $\eps>0$ such that for any 
  area one surface $(X,q)\in S$ satisfying 
\begin{enumerate}
  \item $\ell_3(X,q)>2\delta$, and 
  \item $(X,q)$ admits a complete Delaunay triangulation $\cD$ 
    with the property that the length of every edge is either 
    less than $\eps$ or at least $\delta$ 
\end{enumerate}
  there exists a $K$-network of $N$ embedded squares in $(X,q)$ 
  such that each square has side $\delta$.  
\end{proposition}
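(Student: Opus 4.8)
The plan is to construct the $K$-network explicitly from the Delaunay triangulation $\cD$, using the long edges (those of length $\ge\delta$) as a scaffold and the hypothesis $\ell_3(X,q)>2\delta$ to guarantee the network graph is connected. First I would recall the basic geometry of complete Delaunay triangulations: the number of triangles is bounded in terms of the stratum $S$ (it depends only on the genus and the number and orders of the zeros), so $N=N(S)$ and the constants can be chosen uniformly. The edges of length $<\eps$ collapse the surface, after rescaling, to a controlled combinatorial object, and the complement of the union of short edges is a union of "thick" pieces each of which, when we only keep the long edges, looks (up to bounded distortion) like a flat surface glued from finitely many triangles of definite size. Inside each long edge's neighbourhood we can find an embedded square of side $\delta$: the Delaunay property gives a definite-size embedded rectangle around the midpoint of each long edge, and shrinking it to a square of side $\delta$ keeps it embedded provided $\eps$ is small enough that short edges cannot intrude. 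This yields the collection $\cN$ of at most $N$ squares, each of area $\delta^2$, hence measure at least $\delta$ after possibly shrinking $\delta$.

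Next I would verify the two defining properties of a $K$-network. For full covering: any point $y\in X$ lies in some Delaunay triangle, and that triangle either has a long edge (so $y$ is within bounded vertical distance, measured in heights of the associated square, of one of our squares — giving $K$-visibility for a universal $K$) or all its edges are short, in which case $y$ sits in a thin collar; one follows the vertical leaf through $y$ until it exits the union of thin triangles, which must happen within a bounded multiple of $\eps$ and hence within a bounded multiple of any square's height. The point is that the vertical foliation cannot stay forever inside the union of short edges' triangles without forcing a short saddle connection or a short separating curve, contradicting $\ell_1$ being not-too-small relative to the long structure; here is where $\ell_3(X,q)>2\delta$ enters, ruling out a short separating system and forcing the long edges to percolate through the surface. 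For connectedness of $\Gamma_K(\cN)$: two squares attached to long edges that bound a common triangle, or that are joined by a chain of triangles with at least one long edge each, are mutually $K$-reachable because there is a single rectangle — living in the union of a bounded number of adjacent triangles — from which both squares are $K$-visible; the fact that no separating system of length $\le 2\delta$ exists means the subgraph of long edges is "connected enough" that these chains reach every square.

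The main obstacle I anticipate is precisely the interplay in the full-covering step between the short edges and the vertical foliation — showing that from any point one can reach a long-edge square by a vertical excursion of length $O(\eps)\cdot(\text{max height})$, uniformly over the stratum. This requires a quantitative statement that a union of Delaunay triangles all of whose edges are shorter than $\eps$ has "vertical extent" $O(\eps)$, which is not automatic (a long thin spike built from many short triangles could in principle be tall). The resolution should come from the Delaunay condition itself: in a Delaunay triangulation the triangles are "fat" in the appropriate sense (empty-circumcircle property bounds eccentricity), so a union of triangles with all sides $<\eps$ genuinely has diameter $O(\eps)$ times the number of triangles, and that number is bounded by $N(S)$. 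I would also need to handle the rescaling carefully: conditions (1) and (2) are stated for area-one surfaces, and the squares of side $\delta$ must be genuinely embedded, so I must check that the chosen $\eps$ depends on $\delta$ (and on $S$) in the stated order of quantifiers — $K,N$ first, then $\delta$, then $\eps$. The remark about punctured surfaces with simple poles requires only that Delaunay triangulations and the length functions $\ell_i$ make sense there, which is already granted by Remark~\ref{rem:poles}.
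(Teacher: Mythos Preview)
Your outline tracks the paper's approach closely: classify edges as short/long, place a square of side $\delta$ near each long edge (the paper does this at the center of the circumscribing disk of each medium/large triangle and at the midpoint of each long edge, using $\ell_3>2\delta$ to rule out a short cylinder obstructing the embedding), and then argue full covering by following a vertical leaf out of the region $Y$ of small triangles. You also correctly isolate the real difficulty as the ``vertical escape'' step.

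The gap is in your proposed resolution of that difficulty. Delaunay triangles are \emph{not} fat: the empty-circumcircle property gives no bound on eccentricity, and in fact the medium triangles in the paper's proof (two long edges, one short) are highly eccentric. More to the point, even if each small triangle had diameter $<2\eps$, a vertical segment could revisit the same triangle many times, so a naive diameter-times-count bound does not control the length of a vertical arc in $Y^o$. The paper's mechanism is topological, not metric: first one shows that $Y^o$ is homotopically trivial (since $|\partial Y|$ is short and the complement of $Y$ is shown to be connected and homotopically nontrivial via an area lower bound for large triangles, Lemma~\ref{large}); then one argues by pigeonhole (Lemma~\ref{ver1}) that a vertical segment of length $>4M\eps$ crosses some small triangle three times, and from these three crossings one builds an \emph{essential} simple closed curve in $Y^o$, contradicting triviality.

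There is a second subtlety you have not addressed. Exiting $Y$ is not enough: the segment must contain a subarc of length comparable to $\eps$ lying in a \emph{single} medium or large triangle, so that the point is genuinely $K$-visible from one of the network squares. The paper handles this with two further lemmas (\ref{ver2} and \ref{ver3}): if every excursion outside $Y$ were shorter than $\eps$, two of them would join the same pair of short edges in $\partial Y$, and the resulting configuration would sit inside a complex generated by saddle connections of length $O(\eps)$ whose boundary is a separating system of length $O(\eps)$, again contradicting $\ell_3>2\delta$. Your sketch does not supply an argument at this level, and the Delaunay-fatness heuristic will not produce one.
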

\begin{proof}
By a \emph{short} (resp. \emph{long}) edge we mean an edge in 
  $\cD$ of length less than $\eps$ (resp. at least $\delta$.)  
By a \emph{small} (resp. \emph{large}) triangle, we mean a 
  triangle in $\cD$ whose edges are all short (resp. long.)  
Assuming $2\eps<\delta$, we note that all remaining triangles 
  in $\cD$ have one short and two long edges; we refer to them 
  as \emph{medium} triangles.  

To each triangle $\Delta\in\cD$ that has a long edge, i.e. any 
  medium or large triangle, we associate an embedded square of 
  side $\delta$ as follows.  
Note that the circumscribing disk $D$ has diameter at least $\delta$.  
Let $S$ be the largest square concentric with $D$ whose interior 
  is embedded and let $d$ be the length of its diagonal.  
If the boundary of $S$ contains a singularity then $d\ge\delta$.  
Otherwise, there are two segments on the boundary of $S$ that map 
  to the same segment in $X$ and $D$ contains a cylinder core curve 
  has length at most $d$.  
The boundary of this cylinder forms a separating system of length 
  at most $2d$, so that $d\ge\delta$.  
In any case, there is an embedded square with side $\delta$ at 
  the center of the disk $D$ and we refer to this square as the 
  \emph{central square} associated to $\Delta$.  

For each pair $(\Delta,\gamma)$ where $\Delta$ is a medium or 
  large triangle and $\gamma$ is a long edge on its boundary, 
  we associate an embedded square $S'$ of side $\delta$ that 
  contains the midpoint of $\gamma$ as follows.  
The same argument as before ensures that $S'$ exists.  
Note that $S'$ is $K$-reachable from the central square associated 
  to $\Delta$ for any $K>0$.  
Note also that if $S''$ is the square associated to $(\Delta',\gamma)$ 
  where $\Delta'\in\cD$ is the other triangle having $\gamma$ on its 
  boundary, then the union of the circumscribing disks contains a 
  rectangle that contains $S'\cup S''$, so that $S''$ is $K$-reachable 
  from $S'$ for any $K>0$.  

Let $\cN$ the collection of the central squares associated to any 
  medium or large triangles $\Delta$ together with all the squares 
  associated with all possible pairs $(\Delta,\gamma)$ where $\gamma$ 
  is a long edge on the boundary of a medium or large traingle $\Delta$.  
The number of elements in $\cN$ is bounded above by some $N=N(S)$.  

\begin{lemma}\label{large}
There is a universal constant $c>0$ such that the area of any 
  large triangle is at least $c\delta^4$.  
\end{lemma}
\begin{proof}
Let $\Delta$ be a large triangle, $a$ the length of its shortest 
  side and $\alpha$ the angle opposite $a$.  
The circumsribing disk $D$ has diameter given by $d=a\csc\alpha$.  
If $d$ is large enough, then $D$ contains a maximal cylinder $C$ 
  whose height $h$ and waist $w$ are related by (\cite{MS}) 
  $$ h \le d \le \sqrt{h^2+w^2} < 2h.$$  
Since the diameter of each component of $\Delta\setminus C$ is 
  at most $w$, there is a curve of length at most $3w$ joining 
  two vertices of $\Delta$.  
Hence, $a\le 3w\le \frac{3}{h} \le frac{6}{d}$.  
Since each side of $\Delta$ is at least $\delta$ we have 
  $$\area(\Delta) \ge \frac{1}{2}\delta^2\sin\alpha 
      = \frac{a\delta^2}{2d} \ge \frac{\delta^4}{12}.$$  
\end{proof}

Let $Y$ be the union of all small triangles and short edges in $\cD$.  
Its topological boundary $\partial Y$ is the union of all short edges.  
Let $Y'$ be a component in the complement of $Y$.  
If $Y'$ contains a large triangle, then Lemma~\ref{large} implies it is 
  homotopically nontrivial as soon as $|\partial Y'|^2<4\pi\area(Y')$, 
  which holds if $\eps$ is small enough.  
Otherwise, $Y'$ is a union of medium triangles and is necessarily 
  homeomorphic to an annulus.  The core of this annulus can neither 
  bound a disk nor be homotopic to a punture.  
Therefore, each component in the complement of $Y$ is homotopically 
  nontrivial.  
Assuming $\eps$ is small enough so that $|\partial Y|<\delta$, 
  we conclude that the complement of $Y$ is connected, from which 
  it follows that $\Gamma_K(\cN)$ is connected for any $K>0$.  

If $Y$ has empty interior, then $X$ is $K$-fully covered by $\cN$ 
  for any $K>0$  and we are done.  
Hence, assume $Y$ has nonempty interior $Y^o$ and note that $Y^o$ is 
  homotopically trivial, for otherwise $\partial Y$ would separate.  
To show that $X$ is $K$-fully covered by $cN$ it is enough to show 
  that for any $x\in Y^o$ we can find a vertical segment starting 
  at $x$, of length at most $K\eps$, and having a subsegment of 
  length at least $\eps$ contained in some medium or large triangle.  
This will be achieved by the next three lemmas.  

\begin{lemma}\label{ver1}
The length of any vertical segment contained in $Y^o$ is at most 
  $4M\eps$ where $M$ is the number of small triangles.  
\end{lemma}
\begin{proof}
Suppose not.  Then there exists a vertical segment $\gamma$ of length 
$4M\eps$ contained in $Y^o$.  Since the length of any component of 
$\gamma$ that lies in any small triangle is less than $2\eps$, there 
exists a small triangle $\Delta$ that intersects $\gamma$ in at least 
three subsegments $\gamma_i,i=1,2,3$.  Let $p_i,i=1,2,3$ be the midpoints 
of these segments and assume the indices are chosen so that $p_2$ lies 
on the arc along $\gamma$ joining $p_1$ to $p_3$.  If $\gamma_1$ and 
$\gamma_2$ traverse $\Delta$ in the same direction, we can form an 
essential simple closed curve in $Y^o$ by taking the arc along 
$\gamma$ from $p_1$ to $p_2$ and concatenating it with the arc in 
$\Delta$ from $p_2$ back to $p_1$.  Since $Y^o$ is homotopically trivial, 
we conclude that $\gamma_1$ and $\gamma_2$ traverse $\Delta$ in opposite 
directions.  Similarly, $\gamma_2$ and $\gamma_3$ traverse $\Delta$ in 
opposite directions, so that $\gamma_1$ and $\gamma_3$ traverse $\Delta$ 
in the same direction.  Let $\tau$ be the arc in $\Delta$ that joins the 
midpoints of $\gamma_1$ and $\gamma_3$.  Note that $\tau$ cannot be 
disjoint from $\gamma_2$ for otherwise we can form a essential simple 
closed curve by taking the union of $\tau$ with the arc along $\gamma$ 
joining $p_1$ and $p_3$.  Let $p_2'$ be the point where $\tau$ intersects 
$\gamma_2$ and note that we can form an essential simple closed curve by 
following arc along $\gamma$ from $p_1$ to $p_2'$, followed by the arc in 
$\Delta$ from $p_2'$ to $p_3$, followed by the arc along $\gamma$ from 
$p_3$ back to $p_2'$, then back to $p_1$ along the arc in $\Delta$.  
In any case, we obtain a contradiction to the fact that $Y^o$ is 
homotopically trivial and this contradiction proves the lemma.
\end{proof}

\begin{lemma}\label{ver2}
Suppose $\gamma$ is a vertical segment in the complement of $Y$ which 
  does not pass through any singularity, has length less than $\eps$, 
  and has each of its endpoints in the interior of some short edge in 
  $\partial Y$.  
Then there is a finite collection of triangles such that $\gamma$ is 
  contained in the interior of their union and each triangle is formed 
  by three saddle connections of lengths less than $7\eps$.  
\end{lemma}
\begin{proof}
Let $\tau$ and $\tau'$ be the short edges in $\partial Y$ that contain 
  the endpoints of $\gamma$, respectively.  
Let $\alpha$ be a curve joining one endpoint of $\tau$ to the endpoint 
  of $\tau'$ on the same side of $\gamma$ by following an arc along $\tau$, 
  then $\gamma$, and then another arc along $\tau'$.  
Let $\beta$ be the curve formed using the remaining arc of $\tau$ followed 
  by $\gamma$ and then the remaining arc of $\tau'$.  
Let $\alpha'$ (resp. $\beta'$) be the geodesic representatives in the 
  homotopy class of $\alpha$ (resp. $\beta$) relative to its endpoints.  
Both $\alpha'$ and $\beta'$ is a finite union of saddle connections 
  whose total length is less than $3\eps$.  
The union $\tau\cup\alpha'\cup\tau'\cup\beta'$ bounds a closed set $C$ 
  whose interior can be triangulated using saddle connections, each of 
  which joins a singularity on $\alpha'$ to a singularity on $\beta'$.  
The length of each such interior saddle connection is less than $7\eps$.  
The union of $C$ with the small triangles having $\tau$ and $\tau'$ on 
  their boundary contains $\gamma$ in its interior.  
\end{proof}

\begin{lemma}\label{ver3}
There is a $K'=K'(S)$ such that any vertical segment of length $K'\eps$ 
  intersects the complement of $Y$ in a subsegment of length $\eps$.  
\end{lemma}
\begin{proof}
Let $\Sigma$ be the set of singularities of $(X,q)$.  
By Lemma~\ref{ver1}, there is some $M'=M'(S)$ such that the length of 
  any vertical segment contained in $Y\cup\Sigma$ is less than $K'\eps$.  
Let $K'=(M'+1)\nu^2$ where $\nu=\nu(S)$ is the total number of edges.  
Suppose there exists a vertical segment $\gamma$ of length $K'\eps$ such 
  that each component in the complement of $Y$ has length less than $\eps$.  
Then there are two subsegments of $\gamma$ in the complement of $Y$ that 
  join the same pair of short edges in $\partial Y$.  
Let $Z$ be the complex generated by saddle connections of length less 
  than $7\eps$.  (See \cite{EM}.)  
Its area is $O(\eps^2)$ and its boundary is $O(\eps)$ where the implicit 
  constants depending only on $S$.  
By Lemma~\ref{ver2} implies the interior of $Z$ is homotopically nontrivial, 
  implying that $\partial Z$ forms a separating system.  
If $\eps$ is sufficiently small, this contradicts $\ell_3(X,q)>2\delta$.  
This is a contradiction proves the lemma.  
\end{proof}

Let $M=M(S)$ be the total number of triangles.  
Given $x\in Y^o$ we may form a vertical segment $\gamma$ of length $K'\eps$ 
  with one endpoint at $x$.  
By Lemma~\ref{ver3}, there is a component of $\gamma$ is the complement of 
  $Y$ whose length is at least $\eps$.  
This component is a union of at most $M$ segments, each of which contained 
  in some medium or large triangle.  
The longest such segment has length at least $\frac{\eps}{M}$.  
Hence, $x$ is $K$-reachable from the central square associated to the 
  medium or large triangle that contains this segment, where $K=K'M$.  
This complete the proof of Proposition~\ref{prop:network}.  
\end{proof}

Boshernitzan's criterion \cite{Ve2} is a consequence of Masur's theorem 
  \cite{Ma1} by the first inequality.  Masur's theorem is a consequence 
  of the following by the second inequality.  
\begin{theorem}\label{thm:recurrent}
If $\limsup_{t\to\infty}\ell_3(X_t)>0$ then $\cF_v$ is uniquely ergodic.  
\end{theorem}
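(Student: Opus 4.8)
The plan is to deduce Theorem~\ref{thm:recurrent} from Proposition~\ref{prop:Masur} together with Proposition~\ref{prop:network}, by extracting along a subsequence a uniform supply of $K$-networks of bounded size. Suppose $\limsup_{t\to\infty}\ell_3(X_t)>0$, and fix $t_n\to\infty$ and $\delta_0>0$ with $\ell_3(X_{t_n})>2\delta_0$ for all $n$. I would first normalise: since $\ell_3$ controls $\ell_2$ and $\ell_1$ from above by the displayed inequalities, the surfaces $X_{t_n}$ stay in a compact part of the stratum where all three quantities are bounded below. In particular each $X_{t_n}$ lies in one of finitely many strata; passing to a subsequence, all of them lie in a single stratum $S$, so the constants $K=K(S)$, $N=N(S)$ from Proposition~\ref{prop:network} are uniform along the subsequence.

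The remaining hypothesis to arrange is condition (2): a complete Delaunay triangulation of $X_{t_n}$ whose edge lengths avoid the window $[\eps,\delta)$. Here I would apply the Teichm\"uller flow itself as a tool. Fix $\delta=\delta_0$ and let $\eps=\eps(\delta)>0$ be the constant produced by Proposition~\ref{prop:network}. The Delaunay triangulation of a fixed surface has finitely many edges with a finite set of lengths, so there are only finitely many "bad" scales at which some edge has length exactly in $[\eps,\delta)$; by replacing $t_n$ with a nearby $t_n'$ (moving a bounded Teichm\"uller time, which changes $\ell_3$ by at most a bounded multiplicative factor and keeps us in the compact part after shrinking $\delta_0$), one can push every Delaunay edge either below $\eps$ or above $\delta$. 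Concretely: the set of times $t$ for which $X_t$ fails condition (2) for the chosen $(\delta,\eps)$ is, on each fixed combinatorial piece, a finite union of short intervals; so arbitrarily close to each $t_n$ there is a time satisfying both (1) and (2). Relabel these as $t_n$.

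With (1) and (2) in hand for all $n$, Proposition~\ref{prop:network} gives for each $n$ a $K$-network $\cN_n$ in $X_{t_n}$ consisting of at most $N$ embedded squares of side $\delta$; each such square has area $\delta^2>0$, so setting $\delta'=\delta^2$ we have all the hypotheses of Proposition~\ref{prop:Masur} with constants $K,N,\delta'$ and the sequence $t_n\to\infty$. Proposition~\ref{prop:Masur} then yields that $\cF_v$ is uniquely ergodic, which is the assertion of Theorem~\ref{thm:recurrent}. The deductions that Boshernitzan's criterion follows via $\ell_1\le\ell_3$ and that Masur's theorem follows via $\ell_2\le\ell_3$ are then immediate: recurrence in moduli space forces $\liminf\ell_1(X_t)>0$ hence $\limsup\ell_3(X_t)>0$ is automatic once one knows the genuinely short objects are separating, which is exactly what the compactness argument above packages.

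The main obstacle is the perturbation step in the second paragraph: one must be sure that a bounded motion along the Teichm\"uller geodesic (or a small rotation) genuinely removes all edges from the forbidden window $[\eps,\delta)$ simultaneously, uniformly in $n$, without destroying $\ell_3>2\delta$. The honest way to handle this is to observe that the Delaunay triangulation varies piecewise-continuously and the set of surfaces violating the length dichotomy is closed of empty interior in the compact part; then a compactness argument produces a single bounded time-shift working for a subsequence. Once that technical point is pinned down, everything else is bookkeeping with the already-proved propositions.
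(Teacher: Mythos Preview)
Your reduction to Propositions~\ref{prop:Masur} and~\ref{prop:network} is the right overall plan, but two steps in your second paragraph are faulty, and the paper handles them quite differently.

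First, the compactness claim is wrong. The displayed inequality is $\ell_1\le\ell_2\le\ell_3$, so a lower bound on $\ell_3$ gives \emph{no} lower bound on $\ell_1$; the surfaces $X_{t_n}$ need not lie in any compact part of the stratum. (If they did, the theorem would be an immediate corollary of Masur's result and there would be nothing to prove.) Incidentally, a Teichm\"uller geodesic stays in a single stratum, so the remark about finitely many strata is unnecessary.

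Second, and more seriously, your perturbation argument to arrange condition~(2) does not work as stated. Moving a bounded Teichm\"uller time $s$ changes every length by a factor in $[e^{-|s|/2},e^{|s|/2}]$, so an edge stuck at an intermediate scale (say $\sqrt{\eps\delta}$) cannot be pushed out of $[\eps,\delta)$ unless $|s|$ is of order $\log(\delta/\eps)$, and nothing prevents this from being large. The claim that the bad set has empty interior is unsubstantiated, and the Delaunay triangulation changes combinatorially along the flow, so the argument would require genuine work.

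The paper sidesteps perturbation entirely with a pigeonhole on the \emph{finite} list of Delaunay edge lengths. Sort them as $\lambda_1^n\le\cdots\le\lambda_M^n$ (with $M$ depending only on the stratum), set $\lambda_0^n=0$, and find the unique index $i$ with $\liminf_n\lambda_i^n=0$ but $\liminf_n\lambda_{i+1}^n>0$. Passing to a subsequence gives $\lambda_{i+1}^n>\delta_1$ for some $\delta_1>0$ and $\lambda_i^n\to0$; now take $\delta=\min(\delta_1,\delta_3)$, let $\eps=\eps(\delta)$ come from Proposition~\ref{prop:network}, and for $n$ large every edge is either below $\eps$ or above $\delta$. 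No time shift is needed, and the possibility that $\ell_1(X_{t_n})\to0$ is absorbed into the ``short edges'' side of the dichotomy rather than excluded.
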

\begin{proof}
Fix $t_n\to\infty$ and $\delta_3>0$ such that $\ell_3(X_n)>2\delta_3$ 
  for all $n$.  
Let $\cD_n$ be a complete Delaunay triangulation of $X_n$ and 
  let $\lambda_i^n$ be the length of the $i$th shortest edge.  
By convention, we set $\lambda_0^n=0$ for all $n$.  
Let $i\ge0$ be the unique index determined by 
\begin{equation}
  \liminf_n \lambda_i^n=0,\quad\text{and}\quad 
  \liminf_n \lambda_{i+1}^n>0.  
\end{equation}
By passing to a subsequence and re-indexing, 
  we may assume there is a $\delta_1>0$ such that 
\begin{equation}
  \lambda_{i+1}^n>\delta_1 \quad\text{for all $n$.}
\end{equation}
Assume $n$ is large enough so that 
\begin{equation}
  \lambda_i^n < \eps  
\end{equation}
  where $\eps$ is small enough as required by Proposition~\ref{prop:network} 
  with $\delta=\min(\delta_1,\delta_3)$.  
The theorem now follows from Propositions~\ref{prop:Masur}.  
\end{proof}

\section{Rates of Divergence}\label{S:Rates}
In this section we discuss the various notions of divergence 
and the rates of divergence.  

The hypothesis of Theorem~\ref{thm:main} can be formulated in 
  terms of the flat metric on $X$ without appealing to the 
  forward evolution of the surface.  
Let $h(\gamma)$ and $v(\gamma)$ denote the horizontal and vertical 
  components of a saddle connection $\gamma$, which are defined by 
$$h(\alpha)=\left|\Re\int_\gamma\omega\right|\quad\text{and}\quad
  v(\alpha)=\left|\Im\int_\gamma\omega\right|.$$
It is not hard to show that the following statements are equivalent.  
\begin{enumerate}
  \item[(a)] There is a $C>0$ such that for all $t>0$, $d(t)<\eps\log t+C.$ 
  \item[(b)] There is a $c>0$ such that for all $t>0$, $\ell(X_t)>c/t^{\eps/2}.$ 
  \item[(c)] There are constants $c'>0$ and $h_0>0$ such that for all 
        saddle connections $\gamma$ satisfying $h(\gamma)<h_0$, 
  \begin{equation}\label{strong:diophantine}
        h(\gamma)>\frac{c'}{v(\gamma)(\log v(\gamma))^\eps}.
  \end{equation}
\end{enumerate}


For any $p>1/2$ there are translation surfaces with nonergodic $\cF_v$ 
  whose Teichm\"uller geodesic $X_t$ satisfies the sublinear slow rate 
  of divergence $d(t)\le Ct^p$.  See \cite{Ch2}.  
Our main result asserts a logarithmic slow rate of divergence is enough 
  to ensure unique ergodicity of $\cF_v$.

\section{Slow divergence}\label{S:Slow}
Our interest lies in the case where $\ell_1(X_t)\to0$ as $t\to\infty$.  
To prove of Theorem~\ref{thm:main} we shall need an analog of 
  Proposition~\ref{prop:Masur} that applies to a continuous family 
  of networks $\cN_t$ whose squares have dimensions going to zero.  
We also need to show that the slow rate of divergence gives us some 
  control on the rate at which the small squares approach zero.  

A crucial assumption in the proof of Theorem~\ref{thm:recurrent} is 
  that the squares in the networks have area bounded away from zero.  
This allowed us to find generic points that persist in the squares 
  of the networks along a subsequence $t_n\to\infty$.  
If the area of the squares tend to zero slowly enough as $t\to\infty$, 
  one can still expect to find persistent generic points, with the 
  help of the following result from probability theory.  

\begin{lemma}\label{lem:PZ}
\textbf{(Paley-Zygmund \cite{PZ})}
If $A_n$ be a sequence of measureable subsets of a probability 
  space satisfying 
  \begin{enumerate}
    \item[(i)] $|A_n\cap A_m|\le K|A_n||A_m|$ for all $m>n$, and  
    \item[(ii)] $\sum |A_n| = \infty$ 
  \end{enumerate}
  then $|A_n \text{~i.o.}|\ge1/K$.  
\end{lemma}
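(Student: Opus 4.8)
The plan is to run a standard second-moment argument on the counting functions $S_N=\sum_{n=1}^{N}\mathbf{1}_{A_n}$, and then upgrade the conclusion from a single union to the $\limsup$ by applying the estimate to every tail of the sequence $(A_n)$.

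First I would record the first two moments of $S_N$. Hypothesis (ii) gives $E[S_N]=\sum_{n=1}^{N}|A_n|\to\infty$ as $N\to\infty$. For the second moment, separating the diagonal terms from the rest and using hypothesis (i) together with the elementary bound $2\sum_{n<m}|A_n||A_m|\le\bigl(\sum_n|A_n|\bigr)^2$, one gets
\[
  E[S_N^2]=\sum_{n=1}^{N}|A_n|+2\!\!\sum_{1\le n<m\le N}\!\!|A_n\cap A_m|
          \le E[S_N]+K\,E[S_N]^2 .
\]

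Next, the Cauchy--Schwarz inequality applied to $S_N=S_N\,\mathbf{1}_{\{S_N>0\}}$ gives $E[S_N]^2\le E[S_N^2]\cdot|\{S_N>0\}|$ (this is the Paley-Zygmund inequality), whence
\[
  \Bigl|\,\bigcup_{n=1}^{N}A_n\,\Bigr|=|\{S_N>0\}|
    \ge\frac{E[S_N]^2}{E[S_N^2]}\ge\frac{1}{E[S_N]^{-1}+K}.
\]
Letting $N\to\infty$ and using $E[S_N]\to\infty$ yields $\bigl|\bigcup_{n\ge 1}A_n\bigr|\ge 1/K$. The one point to watch is that the constant $K$ in the denominator is independent of $N$, so the estimate survives the passage to the limit; beyond this bookkeeping there is no real obstacle, the argument being entirely standard once the hypotheses are in the stated form.

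Finally, I would note that for each fixed $M$ the shifted sequence $(A_n)_{n\ge M}$ still satisfies hypotheses (i) and (ii) verbatim, so the previous step applies to it and gives $\bigl|\bigcup_{n\ge M}A_n\bigr|\ge 1/K$ for every $M$. Since the sets $\bigcup_{n\ge M}A_n$ decrease, as $M\to\infty$, to $\limsup_n A_n=\{A_n\ \text{i.o.}\}$, continuity of the measure from above gives $|\{A_n\ \text{i.o.}\}|\ge 1/K$, which is exactly the conclusion of the lemma.
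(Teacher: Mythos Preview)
Your argument is correct and is the standard second-moment (Paley--Zygmund) proof of this quantitative Borel--Cantelli lemma: the Cauchy--Schwarz step $E[S_N]^2\le E[S_N^2]\,|\{S_N>0\}|$ together with the moment bound $E[S_N^2]\le E[S_N]+K\,E[S_N]^2$ gives $|\bigcup_{n\le N}A_n|\ge 1/(E[S_N]^{-1}+K)$, and then the tail argument passes to the $\limsup$ exactly as you say.

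There is nothing to compare against here: the paper does not prove Lemma~\ref{lem:PZ} at all, but simply quotes it with a reference to the original Paley--Zygmund papers. Your write-up is a fine self-contained justification.
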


\begin{definition}\label{def:buffer}
We say a rectangle is $\alpha$-\emph{buffered} if it can be extended 
  in the vertical direction to a larger rectangle of area at least 
  $\alpha$ that overlaps itself at most once.  
(By the area of the rectangle, we mean the product of its sides.)  
\end{definition}

\begin{proposition}\label{prop:subseq}
Suppose that for every $t>0$ we have an $\alpha$-buffered square $S_t$ 
  embedded in $X_t$ with side $\sigma_t>c/t^{\eps/2}$, $0<\eps\le1$.  
Then there exists $t_n\to\infty$ and $K=K(c,\alpha)$ such that 
  $A_n=f^{-1}S_n$ satisfies $|A_n\cap A_m|\le K|A_n||A_m|$ for all $m>n$ 
  and $t_n\in O(n\log n\log\log n)$.  
\end{proposition}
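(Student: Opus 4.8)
The plan is to choose the subsequence $t_n$ so that two things happen simultaneously: (1) the squares $S_n = S_{t_n}$ are spaced far enough apart in $t$ that the pullbacks $A_n = f_{t_n}^{-1} S_n$ are ``almost independent'' in the Paley--Zygmund sense, and (2) the spacing grows slowly enough that $t_n \in O(n\log n \log\log n)$. The key geometric input is that a rectangle of height $h$ in $X_t$ pulls back under $f_t^{-1}$ to a rectangle of height $h e^{t/2}$ in $X$, while its width contracts by $e^{t/2}$; so a square $S_t$ of side $\sigma_t$ becomes a tall thin rectangle $A_t$ in $X$ of height $\sigma_t e^{t/2}$ and width $\sigma_t e^{-t/2}$, hence still of area $\sigma_t^2 = |S_t|$. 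The $\alpha$-buffering hypothesis is exactly what lets us control overlaps: the buffered extension of $S_t$ pulls back to a rectangle of area at least $\alpha$ that is embedded up to a single overlap, so $A_t$ sits inside a region of $X$ of area $\Theta(\alpha)$ in which $A_t$ itself is (essentially) embedded.

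First I would make the quantitative overlap estimate precise. Fix $m > n$. The pullback $A_n$ is a vertical rectangle in $X$; its buffered extension $\widetilde A_n$ has area $\ge \alpha$ and overlaps itself at most once. I want to bound $|A_n \cap A_m|$. The idea is that $A_m$ is much thinner than $A_n$ (its width is $\sigma_m e^{-t_m/2}$, which we will arrange to be $\ll \sigma_n e^{-t_n/2}$), and also much shorter in the relevant sense once we account for the buffering; so $A_n \cap A_m$ is contained in a bounded number of sub-rectangles of $A_m$, each of area at most a constant times $|A_m| \cdot \frac{|A_n|}{\alpha}$. Concretely: inside the buffer region $\widetilde A_n$ of area $\Theta(\alpha)$, the set $A_n$ occupies a fraction $\Theta(|A_n|/\alpha)$; since $A_m$ ranges over this region (or a bounded multiple of it, using that $t_m - t_n$ will be controlled), a Fubini/slicing argument on the vertical segments comprising $A_m$ gives $|A_n \cap A_m| \le K |A_n||A_m|$ with $K = K(c,\alpha)$ absorbing the buffering constant and the at-most-one self-overlap. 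The bound $\sigma_t > c/t^{\eps/2}$ enters to guarantee the squares do not shrink faster than polynomially, which keeps the widths and heights comparable across the chosen subsequence.

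Next I would choose the subsequence. We need $\sum_n |A_n| = \sum_n \sigma_{t_n}^2 = \infty$ for Lemma~\ref{lem:PZ} to apply downstream, and $|A_n| \ge (c/t_n^{\eps/2})^2 = c^2/t_n^\eps$. With $\eps \le 1$, the series $\sum 1/t_n^\eps$ diverges provided $t_n$ grows no faster than, say, $t_n \asymp n (\log n)^{1/\eps}(\log\log n)^{\cdots}$ — in particular $t_n \in O(n\log n\log\log n)$ suffices when $\eps = 1$, and for $\eps < 1$ an even sparser sequence (still within this bound) works; I would just define $t_n$ by the recursion $t_{n+1} = t_n + \Delta_n$ where $\Delta_n$ is the least amount of time needed for the ``almost independence'' estimate of the previous paragraph to kick in for all future indices, and then verify $\Delta_n = O(\log n \log\log n)$. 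The amount of separation needed is governed by how long it takes the contraction factor $e^{-(t_m - t_n)/2}$ to make $A_m$ negligibly thin relative to $A_n$ inside the buffer — this is a single-exponential effect, so $\Delta_n$ only needs to beat $\log(\text{area ratios})$, which by the polynomial lower bound $\sigma_t > c/t^{\eps/2}$ is $O(\log t_n) = O(\log n)$; an extra $\log\log n$ factor gives room for the summability bookkeeping.

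The main obstacle I anticipate is the overlap estimate in the second paragraph: controlling $|A_n \cap A_m|$ uniformly in $m > n$ requires understanding how the thin rectangle $A_m$ can wind around inside $X$ relative to $A_n$, and the ``overlaps itself at most once'' clause in the buffering definition is doing real work — without it, a very thin rectangle could accumulate on itself and pile up large intersection with $A_n$. The delicate point is that $A_m$ for $m \gg n$ may be extremely long ($\sigma_m e^{t_m/2}$ is huge), so one cannot naively say it is ``small''; one must use that its \emph{width} is tiny and that the portion of it meeting the fixed buffer neighborhood of $A_n$ has total area controlled by $|A_m|$ times the density $|A_n|/\alpha$. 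Making this rigorous likely needs the buffered extension to be invoked for $A_m$ as well (so that $A_m$ too is embedded up to one overlap in a region of area $\ge \alpha$), and then a counting argument bounding how many ``passes'' $A_m$ makes through the buffer of $A_n$ by a constant depending only on $\alpha$ and $c$. Everything else — the recursive choice of $t_n$, the growth bound, the divergence of $\sum|A_n|$ — is routine once that estimate is in hand.
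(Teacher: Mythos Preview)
Your overall plan matches the paper's: pick $t_n$ with gaps $t_{n+1}-t_n\asymp\eps\log t_n$ (the paper uses the implicit recursion $t_{n+1}=t_n+\eps\log t_{n+1}$), then verify quasi-independence and the growth bound $t_n\in O(n\log n\log\log n)$ by a direct induction. Where your proposal has a real gap is the overlap estimate itself.

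You correctly flag the danger that $A_m$ is extremely long, but your proposed fix---invoking the buffer of $A_m$ and ``bounding how many passes $A_m$ makes through the buffer of $A_n$ by a constant depending only on $\alpha$ and $c$''---does not work. The number of passes is \emph{not} bounded; it grows without limit as $m\to\infty$. Nor is the buffer of $S_m$ ever used. You also focus on the \emph{width} contraction $e^{-(t_m-t_n)/2}$ as the governing quantity, but the relevant comparison is vertical, not horizontal.

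The paper's argument runs as follows. Work in $X_n$ rather than $X$: set $R_m=f_n\circ f_m^{-1}S_m$, a tall thin rectangle of height $\sigma_m e^{(t_m-t_n)/2}$, and let $B_n\supset S_n$ be the buffer rectangle of area $\ge\alpha$. The recursion is chosen precisely so that the height of $B_n$ (which is $<2/\sigma_n$) is at most a fixed multiple of the height of $R_m$; this is the one place the spacing of the $t_n$ enters. Now extend $R_m$ vertically to $R_m'$ so that its horizontal edges miss the interior of $B_n$; the height grows by at most $2\,\mathrm{height}(B_n)$, hence $|R_m'|\le(1+4/c^2)|R_m|$. The point of this extension is that every component $I$ of $S_n\cap R_m'$ now sits inside a component $J$ of $B_n\cap R_m'$ which is a \emph{full-height} slice of $B_n$, so $|I|/|J|\le\sigma_n/\mathrm{height}(B_n)\le|S_n|/\alpha$. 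Summing over components and using $\sum|J|\le|R_m'|$ gives
\[
|A_n\cap A_m|=|S_n\cap R_m|\le\sum|I|\le\frac{|S_n|}{\alpha}\sum|J|\le\frac{|S_n|}{\alpha}|R_m'|\le\frac{1+4/c^2}{\alpha}|A_n||A_m|.
\]
So the mechanism is not ``few passes'' but rather ``each pass through $S_n$ is accompanied by a full pass through $B_n$, and the total area of the latter is bounded by $|R_m'|$.'' Once you see this trick, the rest of your outline (the recursion and the growth estimate) is exactly what the paper does.
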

\begin{proof}
Let $(t_n)$ be any sequence satisfying the recurrence relation 
  $$t_{n+1}=t_n+\eps\log(t_{n+1}) \qquad t_0>1.$$  
Note that the function $y=y(x)=x-\eps\log x$ is increasing for $x>\eps$ 
  and has inverse $x=x(y)$ is increasing for $y>1$, from which it follows 
  that $(t_n)$ is increasing.  We have  $$\sigma_m e^{t_m-t_n} 
   >(ct_m^{-\eps/2})t_{n+1}^{\eps}\cdots t_m^{\eps/2}>ct_n^{\eps/2}.$$  
Let $B_n\supset S_n$ be a rectangle in $X_n$ that has the same 
  width as $S_n$ and area at least $\alpha$.  
Since $B_n$ overlaps itself at most once, $\alpha\le2|B_n|\le2$.  
Therefore, the height of $B_n$ is $<2/\sigma_n<(2/c)t_n^{\eps/2}$, 
  which is less than $2/c^2$ times the height of the rectangle 
  $R_m=f_n\circ f_m^{-1}S_m$, by the choice of $t_n$.  
Let $R'_m$ be the smallest rectangle containing $R_m$ that has 
  horizontal edges disjoint from the interior of $B_n$.  
Its height is at most $1+4/c^2$ times that of $R_m$.  
For each component $I$ of $S_n\cap R'_m$ there is a corresponding 
  component $J$ of $B_n\cap R'_m$ (see Figure~\ref{fig:buffer}) 
  so that 
$$|A_n\cap A_m|=\sum|I|\le\frac{\alpha(S_n)}{\alpha}\sum|J|
  \le\alpha^{-1}|S_n|\alpha(R'_m)<\frac{4+c^2}{\alpha c^2}|A_n||A_m|.$$  
Choose $t_0$ large enough so that $t_{n+1}<2t_n$ for all $n$ and suppose 
  that for some $C>0$ and $n>1$ we have $t_n<Cn\log n\log\log n$.  Then 
\begin{align*}
  t_{n+1} &< t_n+\log t_{n+1} < t_n + \log t_n + \log 2 \\
     &< Cn\log n \log\log n + \log n + \log\log n + \log\log\log n + \log 2C \\
     &< Cn\log n \log\log n + \log n \log\log n \qquad\text{for $n\gg1$} \\
     &< C(n+1)\log(n+1)\log\log(n+1) 
\end{align*}
  so that $t_n\in O(n\log n\log\log n)$.  
\end{proof}

\begin{remark}
The condition $\liminf t^{1/2}\ell_1(X_t)>0$ (corresponding to $\eps=1$ above) 
  holds for almost every direction in \emph{every} Teichm\"uller disk.  
\cite{Ma2}
\end{remark}

\begin{figure}
\begin{center}
\includegraphics{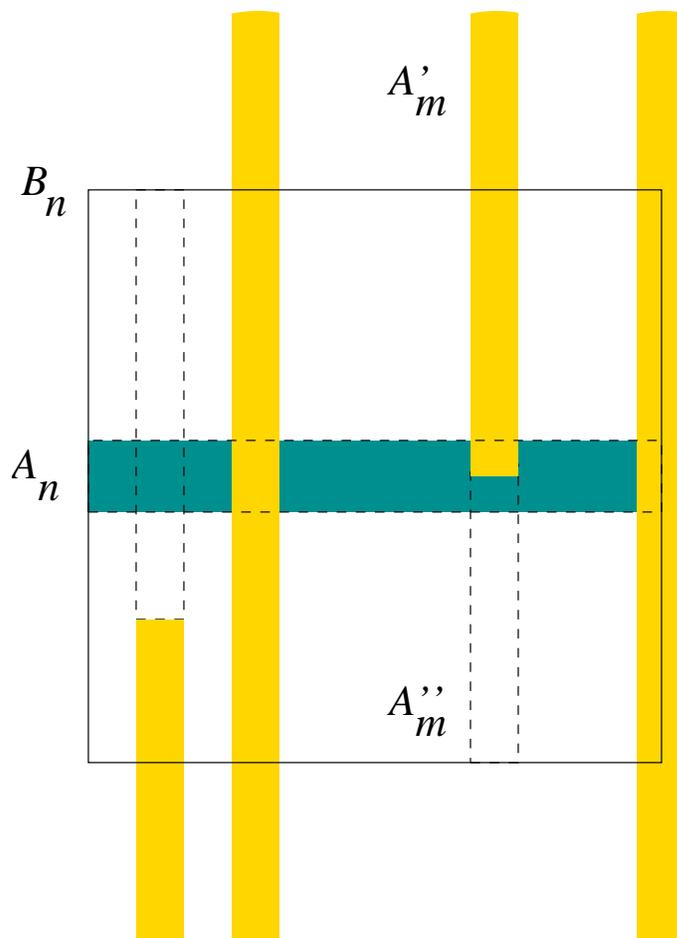}
\caption{For purposes of illustration, the rectangles are represented 
  by their images in $X_t$ where $t$ is the unique time when $B_n$ 
  maps to a square under the composition $f_m\circ f_n^{-1}$ of 
  Teichm\"uller maps.}\label{fig:buffer}
\end{center}
\end{figure}

\begin{definition}\label{def:strip}
Assume the vertical foliation of $(X,q)$ is minimal.  
Given a saddle connection $\gamma$, we may extend each 
critical leaf until the first time it meets $\gamma$.  
Let $\Gamma$ be the union of these critical segments 
with $\gamma$.  By a \emph{vertical strip} we mean any 
component in the complement of $\Gamma$.  We refer to 
any segment along a vertical edge on the boundary of 
a vertical strip that joins a singularity to a point 
in the interior of $\gamma$ as a \emph{zipper}.  
\end{definition}

Each vertical strip has a pair of edges contained in $\gamma$ 
as well as a pair of vertical edges, each containing exactly 
one singularity.  
The number $m$ of vertical strips determined by a saddle 
  connection depends only on the stratum of $(X,q)$.  
Thus, any rectangle containing the vertical strip with 
  most area has area is a $1/m$-buffer for any square of 
  the same width contained in it.  
See Figure~\ref{fig:strip}.  

\begin{figure}
\begin{center}
\includegraphics{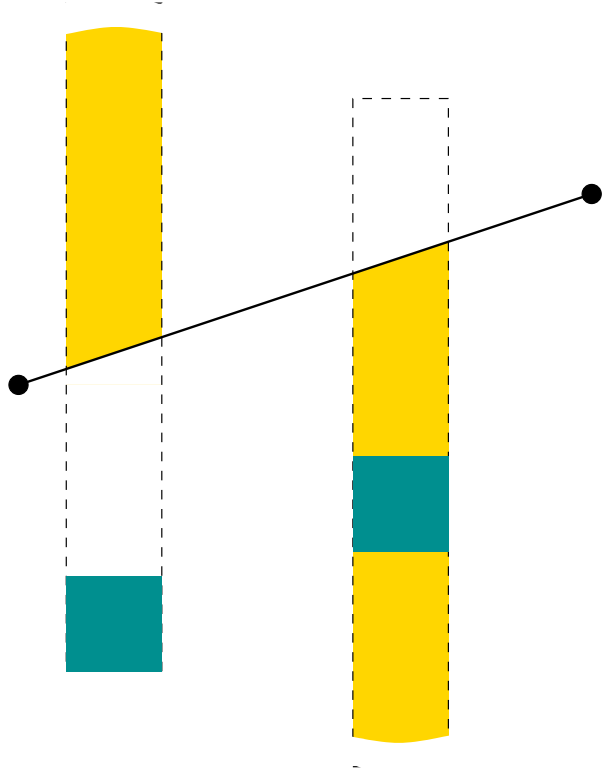}
\caption{Any rectangle containing the vertical strip with most area 
  serves as a buffer for any square of the same width contained in it.}
\label{fig:strip}
\end{center}
\end{figure}

The condition (\ref{strong:diophantine}) prevents the slopes 
  of saddle connections from being too close to vertical.  
This allows for some control on the widths of vertical strips.  
\begin{proposition}\label{prop:strip}
Let $h_0>0$, $c>0$ and $\eps>0$ be the constants of the 
  Diophantine condition satisfied by $(X,q)$.  
Let $m$ be the number of vertical strips determined by 
  any saddle connection.  
For any $\kappa>1$ and $\delta>2m\eps$ there exists a 
  $t_0=t_0(h_0,c,\eps,\kappa,\delta)>1$ such that 
  for any $t>t_0$ and any saddle connection $\gamma$ in $X_t$ 
  whose length is at most $\kappa$, the width of any vertical 
  strip determined by $\gamma$ is at least $t^{-\delta}$.  
\end{proposition}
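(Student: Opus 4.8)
The plan is to show that a sufficiently narrow vertical strip would force the existence of a short saddle connection violating the Diophantine condition~(\ref{strong:diophantine}). I would proceed in three steps.

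First, transport the Diophantine data to time $t$. Under $f_t$ a saddle connection of $X$ with components $(a,b)$ becomes one of $X_t$ with components $(e^{t/2}a,e^{-t/2}b)$; applying~(\ref{strong:diophantine}) to the $f_t$-preimage of a saddle connection $\sigma$ in $X_t$ with $h(\sigma)\le u$ and $v(\sigma)\le V$, and using $\log(e^{t/2}V)\le t$ once $t$ is large (with $V$ at most polynomial in $t$), yields an inequality of the shape $u> c_1(Vt^\eps)^{-1}$ for a constant $c_1>0$ depending only on the Diophantine constants. Hence it is enough to produce, from a vertical strip of width $w$ determined by a saddle connection of length at most $\kappa$, a saddle connection $\gamma'$ in $X_t$ with $h(\gamma')\le w$ and $v(\gamma')\le t^{(2m-1)\eps}$: this gives $w> c_1 t^{-2m\eps}> t^{-\delta}$ once $t$ is large, since $\delta>2m\eps$.

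Second, build $\gamma'$ from the strip. Let $P$ be a vertical strip of width $w$ determined by $\gamma$, with a singularity $p_i$ on each of its two vertical edges. Join $p_1$ to $p_2$ by crossing $P$ from $p_1$ to the opposite vertical edge --- a displacement whose horizontal part has modulus at most $w$ --- and then travelling along that edge to $p_2$, whose vertical part has modulus at most the length of the zipper traversed. The geodesic representative of this path relative to its endpoints is a concatenation of saddle connections; letting $\gamma'$ be one of its pieces we get $h(\gamma')\le w$, while $v(\gamma')$ is at most $w$ plus the length of the longest zipper on the boundary of a vertical strip of $\gamma$.

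Third, and this is the heart of the matter, bound the zipper lengths. A zipper on the boundary of a strip $P'$ of $\gamma$ lies along a vertical edge of $P'$, so its length is at most the vertical extent of $P'$, which equals $\area(P')/w(P')\le C/w(P')$ since there are at most $m$ strips of bounded self-overlap in a surface of area one; thus it suffices to bound $w(P')$ from below for every strip of every saddle connection of length at most $\kappa$. Here I would induct on a combinatorial complexity bounded by $m$: the saddle connection $\gamma'$ manufactured in the second step runs essentially parallel to $\gamma$ and cuts off a definite subsurface, so it carries strictly fewer strips that need to be controlled; its length is bounded in terms of $\kappa$ and the previous level's width bound, so the induction hypothesis controls its zippers, and re-running the first two steps with $\gamma'$ in place of $\gamma$ improves the width bound by one further factor of $t^{2\eps}$. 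After at most $m$ iterations this yields $w(P')\ge t^{-2m\eps}$ for all such strips, in particular for the one we started with. The main obstacle lies entirely in this induction: one must pin down a complexity measure, bounded by $m$, that strictly decreases on passing from $\gamma$ to $\gamma'$, while simultaneously keeping the lengths from blowing up; once that bookkeeping is arranged, the first two steps are routine.
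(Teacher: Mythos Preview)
Your first two steps are essentially correct and match the paper's setup. The gap is in step three. First, the number $m$ of vertical strips determined by a saddle connection is an invariant of the stratum: \emph{every} saddle connection, including your $\gamma'$, determines exactly $m$ strips, so the proposed complexity does not decrease. Second, the length of $\gamma'$ is not controlled by $\kappa$: its vertical component is governed by the zipper on $\partial P$, and since $P$ has width $w<t^{-\delta}$ and area at most $1$, that zipper can be as long as $1/w>t^{\delta}$. Thus $\gamma'$ may be enormously long, and you cannot feed it back into an inductive hypothesis stated for saddle connections of length at most $\kappa$. In short, the circularity you flag --- bounding a zipper requires bounding a width, which requires bounding a zipper --- is real, and the induction you sketch does not break it.

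The paper's argument avoids both issues by never leaving the strips of the original $\gamma$. One supposes a strip $P_1$ of $\gamma$ has width $<t^{-\delta}$; the Diophantine inequality then forces the saddle connection joining the two singularities on $\partial P_1$ to have large vertical component, so the longer zipper on $\partial P_1$ has height $\gtrsim t^{\delta-\eps}$. Now adjoin the \emph{adjacent} strip $P_2$ of $\gamma$ along that tall zipper: the contraption $P_1\cup P_2$ contains a parallelogram of height $\gtrsim t^{\delta-\eps}$ and area $\le 1$, so its total width is $\lesssim t^{-(\delta-\eps)}$; applying Diophantine to the saddle connection crossing this contraption shows the new boundary zipper is again tall. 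Iterating, each adjunction costs one power of $t^{\eps}$ in the exponent. The complexity that genuinely decreases is the number of strips of $\gamma$ not yet adjoined; after at most $m$ steps the contraption contains them all, its boundary zippers degenerate to zero, and this contradicts the lower bound just established. The hypothesis $\delta>2m\eps$ is what keeps the exponents positive throughout.
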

\begin{proof}
Without loss of generality we may assume $c<1$.  
The value of $t_0$ is chosen large enough to satisfying various 
  conditions that will appear in the course of the proof.  
In particular, we require $t_0$ be large enough so that for 
  $j=1,\dots,2m$ and any $t>t_0$ we have 
\begin{equation}\label{ieq:cj}
  c^{2j-1}t^{\delta-j\eps}-\kappa>c^{2j}t^{\delta-j\eps}.  
\end{equation}
First, observe that for any saddle connection $\gamma'$ in $X_t$ 
\begin{equation}\label{ieq:dio}
  h(\gamma')\le h(\gamma)\quad\text{and}\quad v(\gamma')\le 
    e^{t/2}\quad\Rightarrow\quad h(\gamma')v(\gamma')>ct^{-\eps}.
\end{equation}
Indeed, if $\kappa e^{-t_0/2}<h_0$ we can apply the Diophantine 
  condition to the saddle connection $\gamma'_0$ in $X$ that 
  corresponds to $\gamma'$ to conclude 
$$h(\gamma')v(\gamma')=h(\gamma'_0)v(\gamma'_0)
     >\frac{c}{(\log v(\gamma'_0))^\eps}
     =\frac{c}{(t/2+\log v(\gamma'))^\eps}\ge ct^{-\eps}.$$

We shall argue by contradiction and suppose that there is 
  a vertical strip $P_1$ supported on $\gamma$ whose width 
  is $<t^{-\delta}$.  
Let $\gamma_1$ be the saddle connection joining the 
  singularities on its vertical edges.  
If $v(\gamma_1)\le e^{t/2}$, then (\ref{ieq:dio}) implies 
  $v(\gamma_1)>ct^{\delta-\eps}$.  If $v(\gamma_1)>e^{t/2}$ 
  we get the same conclusion by choosing $t_0$ large enough.  
We shall consider only zippers that protrude from a fixed 
  side of $\gamma$.  
Using (\ref{ieq:cj}) with $j=1$ we see that the height $a_1$ 
  of the longer zipper on the boundary of $P_1$ satisfies 
  $$a_1>c^{\delta-\eps}-\kappa > c^2t^{\delta-\eps}.$$
Suppose we have a contraption $P_1\cup\dots\cup P_j,j\ge1$ 
  of vertical strips joined along zippers of height 
  $a_1,\dots,a_{j-1}$ and such that on the boundary of the 
  contraption there is a zipper of height $a_j$ satisfying 
\begin{equation}\label{ieq:height}
  \min(a_1,\dots,a_j)>c^{2j}t^{\delta-j\eps}.  
\end{equation}
If the total width $w_j$ of the contraption is less than 
  that of $\gamma$, we can adjoin a vertical strip $P_{j+1}$ 
  along the zipper of height $a_j$.  
The new contraption $P_1\cup\dots\cup P_{j+1}$ contains an 
  embedded parallelogram with a pair of vertical sides of 
  length greater than the RHS of (\ref{ieq:height}) and 
  whose width equals the total width $w_{j+1}$ of the new 
  contraption.  
Therefore, 
\begin{equation}\label{ieq:width}
   w_{j+1}<c^{-2j}t^{-(\delta-j\eps)} 
\end{equation}
Let $a_{j+1}$ be the height of the longer zipper on the 
  boundary of the new contraption.  

\textbf{Claim:} $a_{j+1}>c^{2(j+1)}t^{\delta-(j+1)\eps}.$

If not, we can find a saddle connection $\gamma_{j+1}$ that 
  crosses from one vertical boundary of the union to the 
  other, with vertical component satisfying 
  $v(\gamma_{j+1})<c^{2(j+1)}t^{\delta-(j+1)\eps}+\kappa<
    c^{2j+1}t^{\delta-(j+1)\eps}$ by virtue of (\ref{ieq:cj}), 
    assuming $j+1\le m$.  
But then (\ref{ieq:dio}) implies $$w_{j+1}>
    \frac{c}{v(\gamma_{j+1}t^\eps}>c^{-2j}t^{-(\delta-j\eps)}$$
   which contradicts (\ref{ieq:width}), and thus establishes 
   the claim.  

As soon as the total width of the new contraption equals that 
  of $\gamma$, both zippers on the boundary are degenerate or 
  have height zero, contradicting the claim.  
This contradiction implies width of $P_1$ is at least $t^{-\delta}$.  
\end{proof}

\begin{theorem}\label{thm:slow:div}
There exists $\eps>0$ depending only on the stratum of $(X,q)$ 
  such that $\liminf t^\eps\ell_1(X_t)>0$ implies $\cF_V$ is 
  uniquely ergodic.  
\end{theorem}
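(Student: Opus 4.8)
The plan is to feed the slow-divergence hypothesis into the machinery assembled in this section: it supplies the Diophantine condition~(\ref{strong:diophantine}), which through Proposition~\ref{prop:strip} prevents the vertical strips (and certain embedded squares built from them) from shrinking too fast, and an appropriate $K$-network of such squares is then processed by Propositions~\ref{prop:subseq} and~\ref{prop:Masur} together with the Paley--Zygmund Lemma~\ref{lem:PZ}. Two reductions come first. Since $\ell_1\le\ell_3$ pointwise, if $\limsup_t\ell_1(X_t)>0$ then $\limsup_t\ell_3(X_t)>0$ and Theorem~\ref{thm:recurrent} already gives the conclusion; so I assume $\ell_1(X_t)\to0$. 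In particular $X$ has no vertical saddle connection — one would have length $e^{-t/2}$ times its length in $X$, forcing $t^{\eps}\ell_1(X_t)\to0$ — hence $\cF_v$ is minimal and Definition~\ref{def:strip} applies in every $X_t$. By the equivalence of (a)--(c) in \S\ref{S:Rates}, the hypothesis $\liminf_t t^{\eps}\ell_1(X_t)>0$ is statement~(b) after renaming, so $(X,q)$ satisfies~(\ref{strong:diophantine}) with some exponent $\eps_0=2\eps$ and constants $h_0,c'>0$. Let $m=m(S)$ be the number of vertical strips determined by a saddle connection and $N=N(S)$ the number of squares produced below; I fix the constant $\eps$ of the theorem, as a function of the stratum, small enough that $2m\eps_0<1/(2N)$, and then choose $\delta$ with $2m\eps_0<\delta\le1/(2N)$.

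Next I would construct, for every sufficiently large $t$, a $K$-network $\cN_t$ of at most $N$ embedded squares in $X_t$, each $\alpha$-buffered and each of side at least $t^{-\delta}$, with $K=K(S)$, $N=N(S)$, $\alpha=\alpha(S)>0$. Away from the short saddle connections this is Proposition~\ref{prop:network}: a complete Delaunay triangulation of $X_t$ has at most $p(S)$ edges, with lengths in $[\ell_1(X_t),O(1)]\subset[t^{-\eps},O(1)]$, so for $t$ large there is a gap, at a scale $\delta_t\gtrsim t^{-\eps}\ge t^{-\delta}$, separating ``short'' from ``long'' edges, and the medium and large triangles provide $\alpha(S)$-buffered squares of side $\delta_t$, wired together as in that proof. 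Near a short saddle connection $\gamma_t$ — of length at most $1$ for $t$ large — one uses instead the vertical strips of Definition~\ref{def:strip}: Proposition~\ref{prop:strip}, applied with $\kappa=2$ and the chosen $\delta$, keeps every strip width at least $t^{-\delta}$, and a rectangle exhausting the strip of largest area (area $\ge1/m$, overlapping itself at most once) is a $1/m$-buffer for an embedded square of side $\gtrsim t^{-\delta}$ inside it, as in Figure~\ref{fig:strip}. These strip-squares are joined to the medium- and large-triangle squares by following vertical segments of bounded length, exactly as in Lemmas~\ref{ver1}--\ref{ver3}, so that $\Gamma_K(\cN_t)$ is connected and $X_t$ is $K$-fully covered, $K$ absorbing the (now $t$-dependent but uniformly controlled) ratios of heights to widths.

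Granting the family $(\cN_t)$, I finish as in Proposition~\ref{prop:Masur}. Suppose $\cF_v$ is not uniquely ergodic; fix ergodic $\nu_0\ne\nu_1$ and a horizontal arc $I$ with $\nu_0(I)\ne\nu_1(I)$, and enumerate the squares of $\cN_t$ as $A(t,1),\dots,A(t,N)$ (with repetition). Each family $(A(t,i))_t$ meets the hypotheses of Proposition~\ref{prop:subseq} with parameter $2\delta\le1$, and — the point that makes the construction work — the sequence that proposition produces obeys the recurrence $t_{n+1}=t_n+2\delta\log t_{n+1}$, which does not depend on $i$; hence one sequence $t_n\to\infty$ with $t_n\in O(n\log n\log\log n)$ serves all $i$, and with $A_n^{(i)}:=f_n^{-1}A(t_n,i)$ one has $|A_n^{(i)}\cap A_m^{(i)}|\le K|A_n^{(i)}||A_m^{(i)}|$ for all $m>n$ and all $i$. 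Passing to the product probability space and setting $B_n:=A_n^{(1)}\times\cdots\times A_n^{(N)}$ gives $|B_n\cap B_m|\le K^N|B_n||B_m|$, while $|B_n|\ge(\text{const})\,t_n^{-2N\delta}$ together with $2N\delta\le1$ yields
$$\sum_n|B_n|\ \gtrsim\ \sum_n t_n^{-1}\ \gtrsim\ \sum_n\frac1{n\log n\log\log n}\ =\ \infty.$$
By Lemma~\ref{lem:PZ} the set of points lying in $B_n$ for infinitely many $n$ has positive measure in the product space, so, intersecting with $E(I)\times\cdots\times E(I)$ (of full measure), there is a point $(x_1,\dots,x_N)$ with every $x_i\in E(I)$ and an infinite set of indices $n$ for which $f_{t_n}(x_i)\in A(t_n,i)$ for all $i$ simultaneously. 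Restricting to this subsequence, the finite set $F=\{x_1,\dots,x_N\}$ of generic points persists in the squares of the networks; connectivity of each $\Gamma_K(\cN_{t_n})$, Lemma~\ref{lem:visible}, and finiteness of $F$ then force all of $F$ to determine the same ergodic average on $I$, and since each $X_{t_n}$ is $K$-fully covered the same holds for every generic point, in particular for points realizing $\nu_0(I)$ and $\nu_1(I)$ — a contradiction. Hence $\cF_v$ is uniquely ergodic.

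I expect the network construction of the second paragraph to be the main obstacle. The soft parts — Proposition~\ref{prop:subseq}, the product trick, Paley--Zygmund, and the Masur argument — go through once the squares are in hand; but verifying that a rectangle exhausting a long, thin vertical strip still overlaps itself at most once (so Definition~\ref{def:buffer} is literally met), and that strip-squares and triangle-squares can be wired into a connected, $K$-fully-covering network with $K$ and the buffering constant independent of $t$ as the thresholds $t^{-\delta}$ degenerate, is where the geometry of slow divergence genuinely does the work; keeping all exponents inside the window $2m\eps_0<\delta\le1/(2N)$ is then routine.
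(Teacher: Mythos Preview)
Your overall architecture matches the paper's: build, for each large $t$, a $K$-network of $\alpha$-buffered squares in $X_t$ whose sides decay no faster than a power of $t$, apply Proposition~\ref{prop:subseq} to get a common sequence $t_n$, use Paley--Zygmund on the product $X^N$ to find an $N$-tuple of persistent generic points, and finish with connectivity and Lemma~\ref{lem:visible}. That part of the proposal, including the divergence bookkeeping $\sum t_n^{-2N\delta}\gtrsim\sum(n\log n\log\log n)^{-1}=\infty$, is essentially the paper's argument.

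The gap is exactly where you suspected: the network construction. Three concrete problems. First, Delaunay edge lengths are \emph{not} $O(1)$; a thin cylinder of large modulus forces arbitrarily long Delaunay edges crossing it, so your interval $[\ell_1(X_t),O(1)]$ is wrong and the ``gap between short and long'' argument, even subsequentially as in Theorem~\ref{thm:recurrent}, does not directly apply. Second, and more seriously, the squares produced by Proposition~\ref{prop:network} are nowhere claimed to be $\alpha$-buffered in the sense of Definition~\ref{def:buffer}; without buffering you cannot invoke Proposition~\ref{prop:subseq}, and the quasi-independence estimate $|A_n\cap A_m|\le K|A_n||A_m|$ collapses. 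Third, Proposition~\ref{prop:network} requires $\ell_3(X_t)>2\delta$, which you have no control over here.

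The paper avoids all of this by \emph{not} using Proposition~\ref{prop:network} at all. Instead it uses vertical strips everywhere: for \emph{every} Delaunay edge $\gamma$ of length at most a fixed $\kappa$ (not just the shortest ones) it applies Proposition~\ref{prop:strip} to get a vertical strip of area $\ge 1/m$, and places a square of side $\sigma_t=ct^{-\eps}$ on the equator of each of the two circumscribing disks adjacent to $\gamma$; these squares are automatically $1/m$-buffered by the strip (Figure~\ref{fig:strip}). Delaunay edges longer than $\kappa$ are handled by the cylinders of modulus $>\mu$ that they must cross: a saddle connection on the cylinder boundary again feeds Proposition~\ref{prop:strip} and yields a buffered square on the core curve. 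The resulting collection is a $K$-network for any $K>0$, with cardinality bounded by the stratum, and one then takes $\eps$ so that $\eps N\le 1$. So the fix is not to patch Proposition~\ref{prop:network} with strip-squares near the short edges, but to build the entire network out of strip-squares.
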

\begin{proof}
Let $\cD_t$ be a complete Delaunay triangulation of $X_t$.  
If a triangle $\Delta\in\cD_t$ has an edge $\gamma$ of length 
  $\ell>\sqrt{2/\pi}$ then the circumscribing disk contains a 
  maximal cylinder $C$ that is crossed by $\gamma$ and such 
  that $h\le \ell\le \sqrt{h^2+c^2}$ where $h$ and $c$ are 
  height and circumference of the cylinder $C$ (\cite{MS}).  
Since $hc=\area(C)\le1$ a long Delaunay edge will cross a 
  cylinder of large modulus.  
Let $\kappa$ and $\mu$ be chosen so that a Delaunay edge 
  of length $>\kappa$ crosses a cylinder of modulus $>\mu$ 
  and assume $\mu$ is large enough so that the cylinder 
  crossed by the Delaunay edge is uniquely determined.  

For each Delaunay edge $\gamma$ of length at most $\kappa$, 
  let $\Delta$ and $\Delta'$ be the Delaunay triangles that 
  have $\gamma$ on its boundary, and let $D$ and $D'$ the 
  respective circumscribing disks.  
Applying Proposition~\ref{prop:strip} we can find a vertical 
  strip of area at least $\delta$ ($=\frac{1}{m}$) which is 
  contained in some immersed rectangle than contains a square 
  $S$ of side $\sigma_t=ct^{-\eps}$ centered at some point on 
  the equator of $D$ as well as a square $S'$ of the same size 
  centered at some point on the equator of $D'$.  
Both $S$ and $S'$ are $\delta$-buffered and $K$-reachable from 
  each other for any $K>0$.  

Call an edge in $\cD_t$ \emph{long} if it crosses a cylinder 
  of modulus $>\mu$.  
Call a triangle in $\cD_t$ \emph{thin} if it has two long edges.  
We note that if a triangle in $\cD_t$ has any long edges on its 
  boundary, then it has exactly two such edges.  
Each cylinder $C$ of modulus $>\mu$ determines a collection 
  of long edges and thin triangles whose union contains $C$.  
Moreover, the intersection of the disks circumscribing the 
  associated thin triangles contains a $\kappa/2$-neighborhood 
  of the core curve of $C$.  
For each such $C$, choose a saddle connection on its boundary 
  and apply Proposition~\ref{prop:strip} to construct a 
  $\delta$-buffered square $S''$ of side $\sigma_t$ centered 
  at some point on the core curve of $C$.  

Let $\cN_t$ be the collection of all squares $S$ and $S'$ 
  associated with edges in $\cD_t$ of length at most $\kappa$ 
  together with all the squares $S''$ associated cylinders 
  of modulus $>\mu$.  
It is easy to see that $\cN_t$ is a $K$-network for any $K>0$ 
  and the number of elements in $\cN_t$ is bounded above by 
  some constant $N$ that depends only on the stratum.  
Choose $\eps$ so that $\eps N\le1$ and let $t_n\to\infty$ be 
  the sequence given by Proposition~\ref{prop:subseq}, and 
  note that $\sum_n\sigma_n^2=\infty$ by the choice of $\eps$.  
Let $S_i^n,i=1,\dots,N$ enumerate the elements of $\cN_n$, 
  using repetition, if necessary.  
Aapplying Lemma~\ref{lem:PZ} to the subsets 
  $A_n=f_n^{-1}S_1^n\times\dots\times f_n^{-1}S_N^n$ 
  of the probability space $X^N$, we obtain an $N$-tuple of 
  generic points $(x_1,\dots,x_N)$ with the property that 
  for infinitely many $n$, $f_nx_i\in S_i^n$ for all $i$.  
By passing to a subsequence, we may assume this holds for 
  every $n$.  

Let $\nu$ be the ergodic component that contains $x_1$.  
Since $\Gamma_K(\cN_n)$ is connected, we can find for each $n$ an 
  $x_i,i\neq1$ such that $f_nx_i$ is $K$-reachable from $f_nx_1$.  
After re-indexing, if necessary, we may $f_nx_2$ is $K$-reachable 
  from $f_nx_1$ for infinitely many $n$, and by further passing 
  to a subsequence, we may assume this holds for every $n$.  
By Lemma~\ref{lem:visible} it follows that $x_2$ belongs to 
  the ergodic component $\nu$.  
Given $x_1,\dots,x_i,i<N$ we can find for each $n$ an $x_j,j>i$ 
  such that $f_nx_j$ is $K$-reachable from $f_n\{x_1,\dots,x_i\}$.  
After re-indexing and passing to a subsequence, we may assume 
  that $f_nx_{i+1}$ is $K$-reachable from $f_n\{x_1,\dots,x_i\}$.  
Proceeding inductively, we deduce that each $x_i$ belongs to 
  the ergodic component $\nu$.  

Since $X_n$ is $K$-fully covered by $\cN_n$, given any generic 
  point $z$, we can find for each $n$ an $x_i$ such that $f_nz$ 
  is $K$-visible from $f_nx_i$.  
For some $i$ this holds for infinitely many $n$, so that by 
  Lemma~\ref{lem:visible}, $z$ belongs to the same ergodic 
  component as $x_i$, i.e. $\nu$.  
This shows that $\cF_v$ is uniquely ergodic.  
\end{proof}

\begin{remark}
Given any function $r(t)\to\infty$ as $t\to\infty$ there exists 
  a Teichm\"uller geodesic $X_t$ determined by a nonerogdic vertical 
  foliation such that $\limsup_{t\to\infty}r(t)\ell_1(X_t)>0$.  
See \cite{CE}.  
\end{remark}

\section{Nonergodic directions}\label{S:Veech}
In the case of double covers of the torus branched over two 
  points, it is possible to give a complete characterisation 
  of the set of nonergodic directions.  
This allows us to obtain an affirmative answer to a question 
  of W.~Veech (\cite{Ve1}, p.32, question 2).  
We briefly sketch the main ideas of this argument.  

Let $(X,q)$ be the double of the flat torus $T=(\C^2/\Z[i],dz^2)$ 
  along a horizontal slit of length $\lambda, 0<\lambda<1$.  
Let $z_0,z_1\in T$ be the endpoints of the slit.  
The surface $(X,q)$ is a branched double cover of $T$, branched 
  over the points $z_0$ and $z_1$.  
Assume $\lambda\not\in\Q$.  
(If $\lambda\in\Q$, the surface is square-tiled, hence Veech; 
  in this case, a direction is uniquely ergodic iff its slope 
  is irrational.)  
Let $V$ be the set of holonomy vectors of saddle connections in $X$.  
Then $$V=W\cup Z$$ where $Z$ is the set of holonomy vectors of 
  simple closed curves in $T$ and $W$ is the set of holonomy 
  vectors of the form $\lambda+m+ni$ where $m,n\in\Z$.  
We refer to saddle connections with holonomy in $W$ as \emph{slits} 
  and those with holonomy in $Z$ as \emph{loops}.  
Given any slit $w\in W$, there is segment in $T$ joining $z_0$ 
  to $z_1$ whose holonomy vector is $w$.  
The double of $T$ along this segment is a branched cover that 
  is biholomorphically equivalent to $(X,q)$ if and only if 
  $$w\in W_0:=\{\lambda+m+ni: m,n\in2\Z\}.$$ 
(See \cite{Ch1}.)  In this case, the segment lifts to a pair 
  of slits that are interchanged by the covering transformation 
  $\tau:X\to X$ and the complement of their union is a pair of 
  slit tori also interchanged by the involution $\tau$.  
A slit is called \emph{separating} if its holonomy lies in $W_0$; 
  otherwise, it is \emph{non-separating}.  

Fix a direction $\theta$ and let $\cF_\theta$ be the foliation in 
  direction $\theta$.  
Let $X_t$ be the Teichm\"uller geodesic determined by $\cF_\theta$.  
Let $\ell(X_t)$ denote the length of the shortest saddle connection 
  measured with respect to the sup norm.  
Assume $\lim_{t\to\infty}\ell(X_t)=0$ for otherwise the length of the 
  shortest separating system is bounded away from zero along some 
  sequence $t_n\to\infty$ and Theorem~\ref{thm:recurrent} implies 
  $\cF_\theta$ is uniquely ergodic.  
Note that the shortest saddle connection can always be realised by 
  either a slit or a loop, and if $t$ is sufficiently large, we may 
  also choose it to have holonomy vector with positive imaginary part.  
Note also that $\log\ell(X_t)$ is a piecewise linear function of 
  slopes $\pm1$.  

Let $v_j$ be the sequence of slits or loops that realise the local 
  minima of $-\log\ell(X_t)$ as $t\to\infty$.  
We assume $\theta$ is minimal so that this is an infinite sequence.  
If $v_j$ is a loop, then $v_{j+1}$ must be a slit since two loops 
  cannot be simultaneously short.  
If $v_j$ and $v_{j+1}$ are both slits, then the length of the vector 
  $v_{j+1}-v_j$ at the time when $v_j$ and $v_{j+1}$ have the same 
  length (with respect to the sup norm) is less than twice the common 
  length.  
It follows that $v=v_{j+1}-v_j\in Z$ so that either $v_j$ or $v_{j+1}$ 
  is non-separating.  
Note that $v$ is the shortest shortest loop at this time.  
If $v_j$ (resp. $v_{j+1}$) is non-separating, then at the first (resp. 
  last) time when $v$ is the shortest loop, there is another loop $v'$ 
  that forms an integral basis for $\Z[i]$ together with $v$.  
Since the common length of these loops is at least one, $v_j$ (resp. 
  $v_{j+1}$) is the unique slit or loop of length less than one and 
  it follows that the length of the shortest separating system is 
  at least one.  
If there exists an infinite sequence of pairs of consecutive slits, 
  then we may apply Theorem~\ref{thm:recurrent} to conclude that 
  $\cF_\theta$ is uniquely ergodic.  

It remains to consider the case when the sequence of shortest vectors 
  alternates between separating slits and loops 
  $$ \dots v_{j-1}, w_j, v_j, w_{j+1}, v_{j+1}, \dots $$ 
  with increasing imaginary parts.  
Note that $w_{j+1}-w_j$ is an even positive multiple of $v_j$, say $2b_{j+1}$.  
The surface $X_t$ at the time $t_j$ when $v_j$ is shortest (slope $\pm1$) 
  can be described quite explicitly.  
The slit $w_j$ is almost horizontal while $w_{j+1}$ is almost vertical.  
The area exchange between the partitions determined by $w_j$ and $w_{j+1}$ 
  is approximately $|v_j\times w_j|=|v_j\times w_{j+1}|=\delta_j$.  
The surface can be represented as a sum of tori slit along $w_{j+1}$, 
  each containing a cylinder having $v_j$ as its core curve and 
  occupying most of the area of the slit torus.  
Using this representation, we can find a single buffered square $S_j$ 
  with side $\sqrt{\delta_j}$ which, together with its image under 
  $\tau$, forms a $K$-network (for any $K>0$).  
A straightforward calculation shows that the sequence $A_j=f_j^{-1}S_j$ 
  satisfies $|A_j\cap A_k|\le K|A_j||A_k|$ for all large enough $j<k$.  
Lemma~\ref{lem:PZ} now implies $\cF_\theta$ is uniquely ergodic if 
  $\sum |A_j|=\infty$.  

Conversely, if $\sum|A_j|<\infty$ then another straightforward 
  calculation shows that the hypotheses of the nonergodicity 
  criterion in \cite{MS} are satisfied, implying that $\cF_\theta$ 
  is nonergodic.

\end{document}